\numberwithin{equation}{section}
\def\al{\alpha}
\def\b{\beta}
\def\de{\delta}
\def\be{\begin{equation}}   \def\ee{\end{equation}}
\def\ba   {\begin{array}}      \def\ea   {\end{array}}
\def\bea  {\begin{eqnarray}}   \def\eea  {\end{eqnarray}}
\def\bean {\begin{eqnarray*}}  \def\eean {\end{eqnarray*}}
\newtheorem{theorem} {Theorem}
\newtheorem{definition} {Definition}
\newtheorem{corollary} {Corollary}
\newtheorem{conjecture} {Conjecture}
\newtheorem{proposition} {Proposition}
\theoremstyle{definition}
\newcommand{\mR}{\ensuremath{\mathbb{R}}}
\newcommand{\mT}{\ensuremath{\mathbb{T}}}
\newcommand{\BC}{\ensuremath{\mathbb{B}\mathbb{C}}}
\newcommand{\im}[1]{\ensuremath{\mathrm{\mathbf{i}}_{\mathbf{#1}}}}
\newcommand{\TC}{\ensuremath{\mathbb{TC}}}
\newcommand{\jh}[1]{\ensuremath{\mathrm{\mathbf{j}}_{\mathbf{#1}}}}
\newcommand{\M}[1]{\ensuremath{\mathcal{M}_{#1}}}
\newcommand{\T}{\ensuremath{\mathcal{T}}}
\newcommand{\Tbb}{\mathbb{T}}
\newcommand{\Te}{\ensuremath{\T_{e}}}
\newcommand{\Ta}{\T^{\ast}}
\newcommand{\N}{\ensuremath{\mathbb{N}}}
\newcommand{\R}{\ensuremath{\mathbb{R}}}
\newcommand{\C}{\ensuremath{\mathbb{C}}}
\newcommand{\Co}{\ensuremath{\mathbb{C}}}
\newcommand{\D}{\mathbb{D}}
\newcommand{\Mu}[1]{\mathbb{M}({#1})}
\newcommand{\Hy}{\ensuremath{\mathcal{H}}}
\newcommand{\Hp}{\ensuremath{\mathcal{H}'}}
\newcommand{\suite}{\{P_{c}^{(n)}(0)\}_{n\in\mathbb{N}}}
\newcommand{\un}{\ensuremath{\gamma_{1}}}
\newcommand{\uc}{\ensuremath{\overline{\gamma_{1}}}}
\newcommand{\dx}{\gamma_{2}}
\newcommand{\dc}{\overline{\gamma_{2}}}
\newcommand{\tr}{\ensuremath{\gamma_{3}}}
\newcommand{\tc}{\ensuremath{\overline{\gamma_{3}}}}
\newcommand{\ut}{\ensuremath{\gamma_{1}\gamma_{3}}}
\newcommand{\uct}{\ensuremath{\overline{\gamma_{1}}\gamma_{3}}}
\newcommand{\utc}{\ensuremath{\gamma_{1}\overline{\gamma_{3}}}}
\newcommand{\uctc}{\ensuremath{\overline{\gamma_{1}}\overline{\gamma_{3}}}}
\newcommand{\vspan}{\text{span}_{\R}}
\newcommand*{\mcap}{\mathbin{\raisebox{-.2ex}{\scalebox{1.3}{\ensuremath{\cap}}}}}
\newcommand{\Mnp}{\ensuremath{\mathcal{M}_n^p}}
\title{Relationship between the Mandelbrot Algorithm and the Platonic Solids}
\author{André Vallières\thanks{E-mail: {\tt andre.vallieres@uqtr.ca}} }
\author{Dominic Rochon\thanks{E-mail: {\tt dominic.rochon@uqtr.ca}}}
\affil{Département de mathématiques et d'informatique, \\ Université du Québec à Trois-Rivières, C.P. 500, Trois-Rivières, Québec, Canada, G9A 5H7}
\date{\small{« Chaque flot est un ondin qui nage dans le courant, chaque courant est un sentier qui serpente vers mon palais, et mon palais est bâti fluide, au fond du lac, dans le triangle du feu, de la terre et de l’air. » \\ — Aloysius Bertrand, Gaspard de la nuit, 1842}}
\begin{document}
\maketitle

\begin{abstract}
This paper focuses on the dynamics of the eight tridimensional principal slices of the tricomplex Mandelbrot set: the Tetrabrot, the Arrowheadbrot, the Mousebrot, the Turtlebrot, the Hourglassbrot,  the Metabrot, the Airbrot (octahedron) and the Firebrot (tetrahedron). In particular, we establish a geometrical classification of these 3D slices using the properties of some specific sets that correspond to projections of the bicomplex Mandelbrot set on various two-dimensional vector subspaces, and we prove that the Firebrot is a regular tetrahedron. Finally, we construct the so-called “Stella octangula” as a tricomplex dynamical system composed of the union of the Firebrot and its dual, and after defining the idempotent 3D slices of $\M3$, we show that one of them corresponds to a third Platonic solid: the cube.
\end{abstract}\vspace{0.5cm}
\noindent\textbf{AMS subject classification:} 32A30, 30G35, 00A69, 51M20
\\
\textbf{Keywords:} Generalized Mandelbrot Sets, Tricomplex Dynamics, Metatronbrot, 3D Fractals, Platonic Solids, Airbrot, Earthbrot, Firebrot, Stella Octangula

%
%
%
%
%
\newpage
\section*{Introduction}

Quadratic polynomials iterated on hypercomplex algebras have been used to generate multidimensional Mandelbrot sets for several years \cite{bedding, BrouilletteRochon, Dang, GarantRochon, Katunin, norton, Rochon1, Senn, Wang, Wang2}. Although this approach is widespread in the litterature, other attempts at generalizing the classic fractal to higher dimensions have been made \cite{Barrallo, Fishback, Garg}. While Bedding and Briggs \cite{bedding} established that possibly no interesting dynamics occur in the case of the quaternionic Mandelbrot set, the generalization given in \cite{Rochon1}, which uses the four-dimensional commutative algebra of bicomplex numbers, possesses an interesting fractal aspect reminiscent of the classical Mandelbrot set. Figure \ref{Deep_Tetrabrot} shows that phenomenon for the so-called Tetrabrot (Fig. \ref{fig:tetrabrot}). 

\begin{wrapfigure}{r}{0.51\textwidth}
\centering
	\begin{subfigure}{0.45\textwidth}
	\centering
	\includegraphics[width=5.5cm]{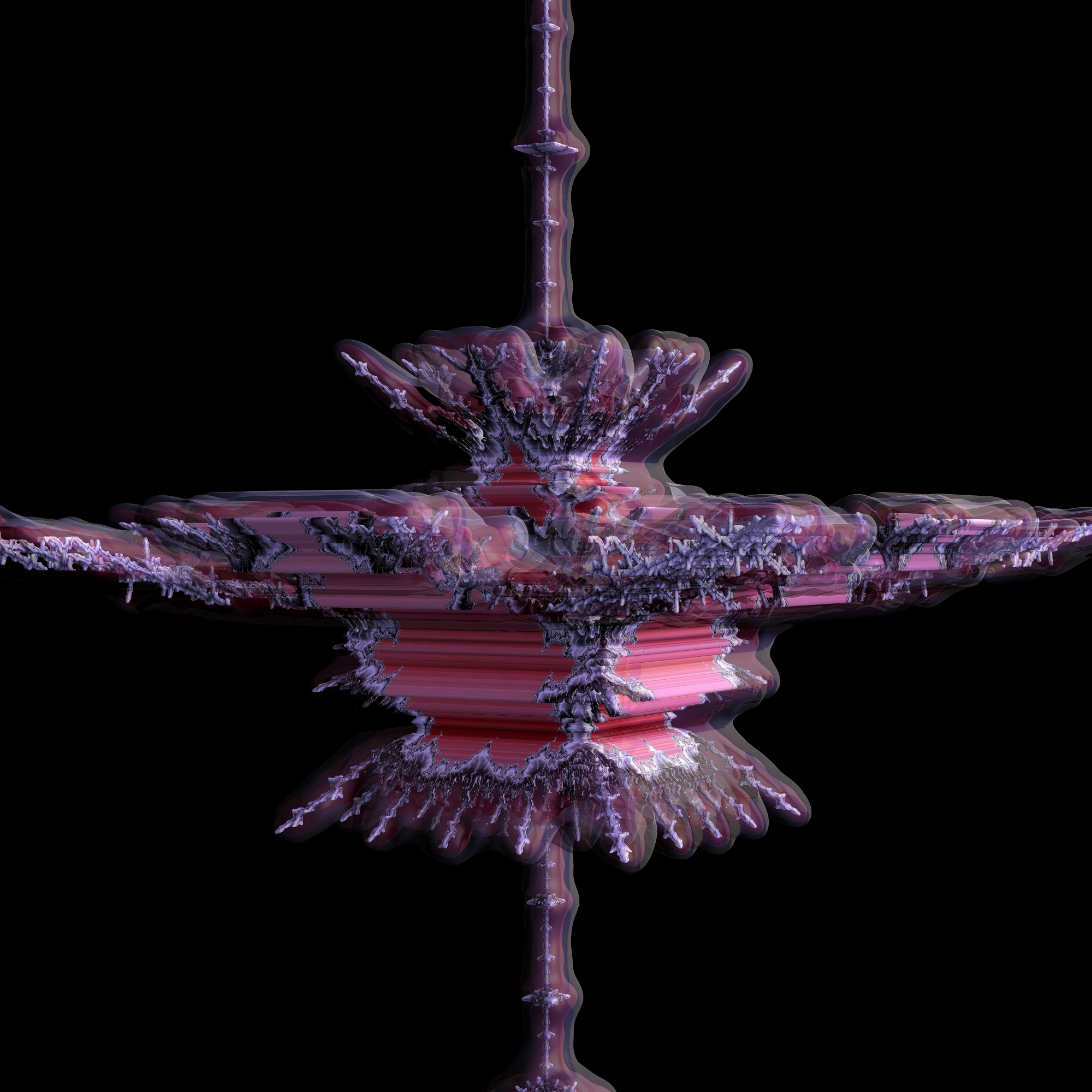}
	\label{fig:Deep_Tetrabrot_01}
	\end{subfigure}
\vfill
	\begin{subfigure}{0.45\textwidth}
	\centering
	\includegraphics[width=5.5cm]{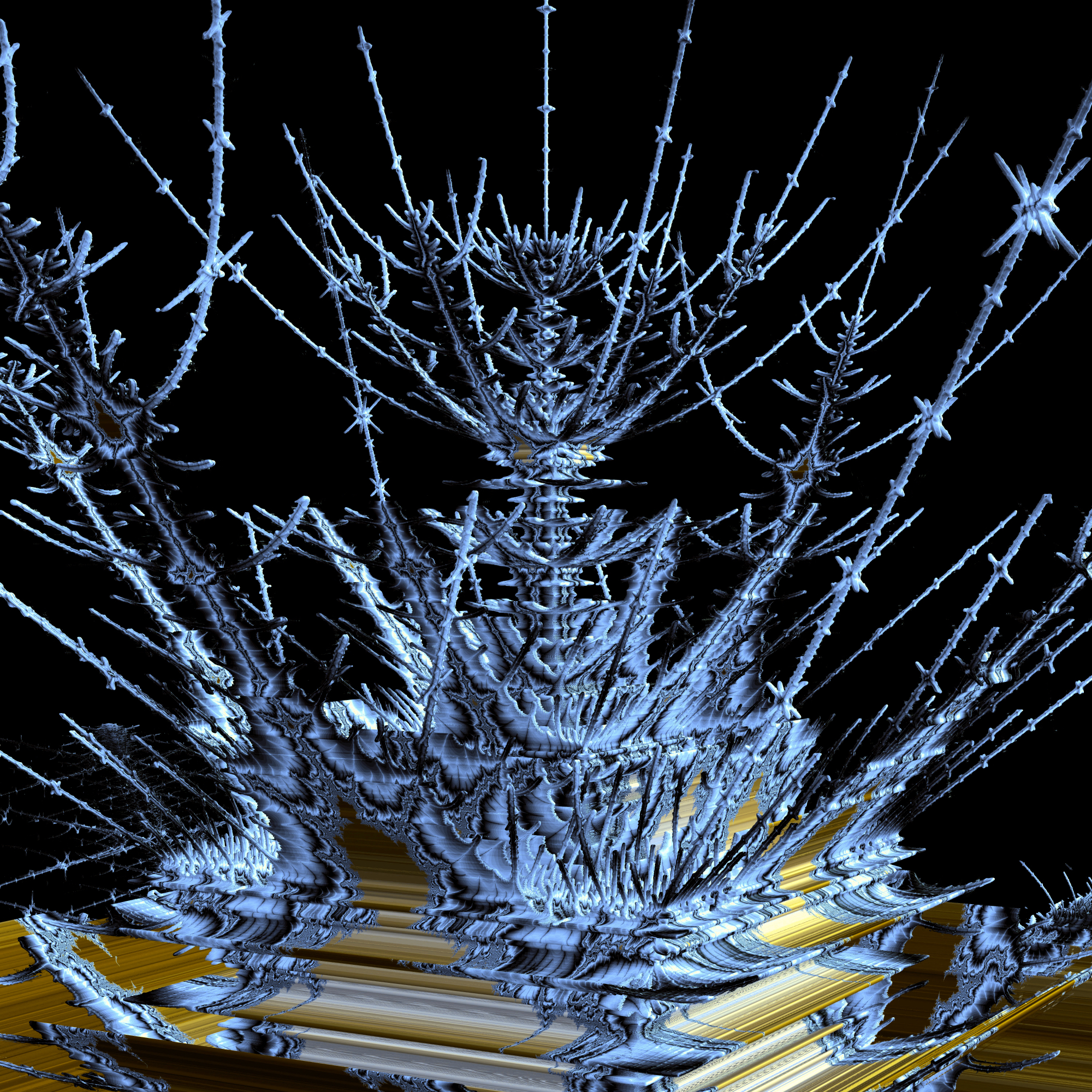}
	\label{fig:Deep_Tetrabrot_02}
	\end{subfigure}
\caption{Deep zoom\\ on the Tetrabrot.}
\label{Deep_Tetrabrot}
\end{wrapfigure}

This bicomplex Mandelbrot set $\M2$ is proven to be connected \cite{Rochon1} and related to a bicomplex version of the Fatou-Julia theorem \cite{Rochon2}. In 2009, these results and ideas were subsequently extended to the multicomplex space for quadratic polynomials of the form $z^2+c$ over multicomplex numbers \cite{GarantRochon}. In addition, the same authors introduced an equivalence relation between the fifty-six principal 3D slices of the tricomplex Mandelbrot set $\M3$ in order to establish which slices have the same dynamics and appearance in 3D visualization software. By doing this, eight equivalence classes were identified and characterized \cite{BrouilletteRochon}, and for each of those, one particular slice was designated as a canonical representative (Fig. \ref{fig:8 slices}).

Later, the scope of this comprehensive study was broadened by the use of the polynomial $z^p+c$ where $p\geq2$. In particular, the authors of \cite{RochonParise2,RochonRansfordParise} determined the exact intervals corresponding to $\mathcal{M}^{p}\cap\R$, depending on whether the integer $p$ is odd or even, and showed that the tricomplex Mandelbrot set $\mathcal{M}_{3}^{3}$ generated by the cubic polynomial $z^3+c$ has only four principal 3D slices \cite{RochonParise}. Then, Brouillette and Rochon \cite{Brouillette,BrouilletteRochon} generalized this result to the multicomplex space by establishing that the multicomplex Multibrot set $\Mnp$, $n\geq 2$, possesses at most nine distinct dynamics when $p$ is even, and at most four when $p$ is odd. In particular, the authors show that every principal 3D slice of $\Mnp$ is equivalent to a tricomplex slice up to an affine transformation, thus establishing the optimality of the tricomplex space in the context of principal 3D slices.

Nevertheless, there are a few aspects of the theory that have not yet been addressed. Indeed, although the principal 3D slices of the tricomplex Mandelbrot set have been classified according to their dynamics, the relationships between some of them remain largely unexplored. Moreover, surprisingly there are two principal 3D slices that exhibit no irregularity, which is in sharp contrast to the other six. In fact, one is a regular octahedron (the Airbrot, Fig.\ref{fig:airbrot}) while the other is conjectured to be a regular tetrahedron (the Firebrot, Fig.\ref{fig:firebrot}) \cite{GarantRochon}, and the underlying mechanism explaining such behavior is currently poorly understood.

Thus, the main objective of this paper is to deepen knowledge on the aspects mentioned above, and by extension, to establish a relationship between the Mandelbrot algorithm and the Platonic solids, which have become an integral part of natural sciences like chemistry and geology due to their remarkable properties and prevalence in those fields (see for example \cite{Daintith, Fragmentation, Tang}). To achieve this, we develop several geometrical characterizations for the principal 3D slices of $\M3$. However, in order to do so, we first need to recall relevant properties in the algebras of bicomplex and tricomplex numbers and introduce new ones.

Therefore, in \autoref{sec:tricomplex}, the algebras of bicomplex and tricomplex numbers are introduced and new results concerning the latter are presented. Then, we establish various geometrical characterizations for the principal 3D slices of $\M3$ in \autoref{sec:geometrical} and we also discuss some of the geometric properties that can be inferred from them, notably the fact that one slice is a regular tetrahedron. In \autoref{sec:cube}, we introduce another type of 3D projection called an \textit{idempotent tridimensional slice}, and we show that one of them is a cube. Finally, we talk about the possibility to generate, in the same 3D subspace as that of the Firebrot and its geometric dual, a regular compound called the stellated octahedron (also named the \textit{Stella octangula}) as a tricomplex dynamical system.

\section{The algebra of tricomplex numbers}\label{sec:tricomplex}
\subsection{Definitions and basics}
Bicomplex and tricomplex numbers are special cases of multicomplex numbers, which were first introduced by Segre \cite{Segre} in 1892. One important reference on the subject is due to Price \cite{Baley}, who studied the general multicomplex number system and provided details for the bicomplex case. A modern recursive definition for the set of multicomplex numbers of order $n$ would be \cite{BrouilletteRochon,GarantRochon,Baley}:
\begin{equation}\label{eq:multicomplex}
\Mu{n}:=\{\eta_1+\eta_2\im{n}:\eta_1,\eta_2\in\Mu{n-1}\}
\end{equation}
where $n\in\N$, $\im{n}^2=-1$ and $\Mu{0}:=\R$. Multicomplex addition and multiplication are defined in an analogous manner to that of the complex plane\footnote{Moreover, the set $\Mu{n}$ together with multicomplex addition and multiplication by real numbers is a vector space over the field $\R$ and is isomorphic to $\R^{2^{n}}$.}:
\begin{align*}
&(\eta_1+\eta_2\im{n}) + (\zeta_1+\zeta_2\im{n}) = (\eta_1+\zeta_1) + (\eta_2+\zeta_2)\im{n};\\
&(\eta_1+\eta_2\im{n})(\zeta_1+\zeta_2\im{n}) = (\eta_1\zeta_1 - \eta_2\zeta_2) + (\eta_1\zeta_2+\eta_2\zeta_1)\im{n}.
\end{align*}
Hence, we have $\Mu{1}\simeq\C(\im1)$, and setting $n=2$ gives the bicomplex numbers $\Mu{2}:=\BC$, which have been studied extensively \cite{Bicomplex,Baley,RochonMaitrise,Rochon3,RochonShapiro}.\footnote{It should be noted that beginning in 1848, J. Cockle developed an algebra which he called the \textit{tessarines} \cite{cockle1,cockle2,cockle3,cockle4} that was later proved to be isomorphic to $\BC$.} It follows immediately that a bicomplex number $w$ can be written as
\begin{gather*}
w=z_1+z_2\im2,\quad z_1,z_2\in\C(\im1)
\intertext{and expressing both $z_1$ and $z_2$ by their real coefficients $x_i$ yields}
w=x_1+x_2\im1+x_3\im2+x_4\jh1
\end{gather*}
where $\jh1=\im1\im2=\im2\im1$. Note that \jh1 is called a \textit{hyperbolic} unit since it satisfies the property $\jh1^2=1$ \cite{RochonShapiro,Sobczyk}. One direct but far-reaching consequence stemming from the existence of such a unit is the presence of non-trivial idempotent elements in $\BC$, namely $\un=\frac{1+\jh1}{2}$ and $\uc=\frac{1-\jh1}{2}$. As demonstrated in the references listed above, these two peculiar bicomplex numbers are the key to extend many classical results from the complex plane. In fact, more generally, the idempotent elements existing in $\Mu{n}$ generate various representations of a given multicomplex number, which in turn can be used to give natural extensions to concepts like holomorphy and power series \cite{Baley,RochonMaitrise,Holomorphy,vajiac}. In our context, the formal identity
\begin{equation}\label{eq:idemp BC}
w=(z_1-z_2\im1)\un+(z_1+z_2\im1)\uc
\end{equation}
which holds for all $w=z_1+z_2\im2\in\BC$ is called the idempotent representation of $w$ and will be used in \autoref{sec:geometrical}. Remark that in this form, multiplication can be carried out term by term.

We now turn our attention to the set of tricomplex numbers, which is denoted $\TC$ and corresponds to the case $n=3$ in \eqref{eq:multicomplex}:
\[
\TC=\Mu3 := \{\eta=\eta_{1}+\eta_{2}\im3:\eta_{1},\eta_{2}\in\BC,\im3^{2}=-1\}. 
\]
It follows from this definition that any tricomplex number can be expressed as a sum of two, four or eight terms, respectively having bicomplex, complex or real coefficients:
\begin{align*}
\eta &= \eta_{1}+\eta_{2}\im3 \\
&= z_{1}+z_{2}\im2 + z_{3}\im3+z_{4}\jh3 \\
&= x_{1} + x_{2}\im1 + x_{3}\im2 + x_{4}\jh1 + x_{5}\im3 + x_{6}\jh2 + x_{7}\jh3 + x_{8}\im4,
\end{align*}
where $\im4=\im1\im2\im3$ is a fourth imaginary unit and $\jh3=\im2\im3=\im3\im2,\,\jh2=\im1\im3=\im3\im1$ are other instances of hyperbolic units. Hence, there exists additional non-trivial idempotent elements in $\TC$, such as $\tr=\frac{1+\jh3}{2},\tc=\frac{1-\jh3}{2},\dx=\frac{1+\jh2}{2}$ and $\dc=\frac{1-\jh2}{2}$. Note that $\tr\tc=\dx\dc=0$ and $\tr+\tc=\dx+\dc=1$, which are the properties that make the set $\{\un,\uc\}$ interesting and useful in the setting of bicomplex numbers. In fact, it is possible to derive an idempotent representation in each of the sets $\{\tr,\tc\}$ and $\{\dx,\dc\}$ akin to that of bicomplex numbers. More will be said on the subject in \autoref{subsec:reps}.

Furthermore, the existence of idempotent elements that do not cancel out when multiplied together (e.g. $\un$ and $\tr$) along with the commutativity of multiplication in $\TC$ imply that the products $\un\tr,\uc\tr,\un\tc$ and $\uc\tc$ are other examples of idempotent elements. Note that the same could be said about the products $\gamma_{2}\gamma_{i},\overline{\gamma_{2}}\gamma_{i},\gamma_{2}\overline{\gamma_{i}}$ and $\overline{\gamma_{2}}\overline{\gamma_{i}}$ where $i\in\{1,3\}$. However, simple calculations show that these elements are all equal to $\un\tr,\uc\tr,\un\tc$ or $\uc\tc$. The following theorem establishes precisely how many distinct idempotent elements exist in the algebra of tricomplex numbers.\footnote{It is conjectured in \cite{Vallieres} that the algebra $\Mu{n},n\geq1$ contains exactly $2^{2^{n-1}}$ distinct idempotent elements.}
\begin{theorem}[See \cite{Vallieres}]\label{thm:16 idempotents}
There exists exactly sixteen distinct idempotent elements in $\TC$:
\begin{gather*}
0,\,1,\,\gamma_k = \frac{1+\jh{k}}{2},\,\overline{\gamma_k}=\frac{1-\jh{k}}{2},\quad\text{where }k=1,2,3,
\shortintertext{the four products}
\ut,\,\uct,\,\utc,\,\uctc
\intertext{and}
1-\ut,\,1-\uct,\,1-\utc\,\text{ and }\,1-\uctc.
\end{gather*}
\end{theorem}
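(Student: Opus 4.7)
The plan is to mimic the bicomplex idempotent identity~\eqref{eq:idemp BC} one level higher, reducing the problem to counting idempotents in $\BC$. Using the properties $\tr+\tc=1$, $\tr\tc=0$, $\tr^2=\tr$, $\tc^2=\tc$ together with $\im2^2=-1$, one verifies that any $\eta=\eta_1+\eta_2\im3\in\TC$ with $\eta_1,\eta_2\in\BC$ admits the representation
\[
\eta = A\,\tr + B\,\tc,\qquad A:=\eta_1-\eta_2\im2,\quad B:=\eta_1+\eta_2\im2,
\]
and the pair $(A,B)\in\BC^2$ is uniquely determined by $\eta$ since the change of basis between $\{1,\im3\}$ and $\{\tr,\tc\}$ is invertible over $\BC$. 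Because multiplication is coordinatewise in this representation, $\eta^2=\eta$ if and only if $A^2=A$ and $B^2=B$ in $\BC$.

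Applying the same argument one level down via~\eqref{eq:idemp BC}, a bicomplex number $w=a\un+b\uc$ is idempotent iff both $a,b\in\C(\im1)$ satisfy $z^2=z$, forcing $a,b\in\{0,1\}$. Hence $\BC$ contains exactly the four idempotents $0,\un,\uc,1$, and combining the two reductions shows that every idempotent of $\TC$ has the form $A\tr+B\tc$ with $A,B\in\{0,\un,\uc,1\}$; the uniqueness of the representation guarantees that the resulting $16$ candidates are pairwise distinct.

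It remains to identify these sixteen elements with those in the statement. The four pairs $(A,B)\in\{0,1\}^2$ yield $0,\,\tc,\,\tr,\,1$; the diagonal pairs $(\un,\un)$ and $(\uc,\uc)$ collapse via $\tr+\tc=1$ to $\un=\gamma_1$ and $\uc=\overline{\gamma_1}$; and the anti-diagonal pairs expand, using $\jh1\jh3=-\jh2$, to
\[
\un\tr+\uc\tc=\tfrac{1-\jh2}{2}=\overline{\gamma_2},\qquad \uc\tr+\un\tc=\tfrac{1+\jh2}{2}=\gamma_2.
\]
The remaining eight pairs — those with exactly one coordinate in $\{\un,\uc\}$ and the other in $\{0,1\}$ — produce the four products $\ut,\uct,\utc,\uctc$ when the other coordinate is $0$, and, using $\tr+\tc=1$ together with $\un+\uc=1$, the four complements $1-\uctc,\,1-\utc,\,1-\uct,\,1-\ut$ when it is $1$.

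The proof is essentially structured enumeration and presents no genuine obstacle. The only delicate point is the uniqueness of the decomposition $\eta=A\tr+B\tc$, which rests on the invertibility of a small linear change of coordinates over $\BC$; the subsequent identifications with the sixteen listed elements are then short algebraic computations.
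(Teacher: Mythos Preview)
Your argument is correct. The paper does not actually supply a proof of this theorem; it merely states the result and cites \cite{Vallieres} for the details. Your approach---reducing to idempotents of $\BC$ via the $\gamma_3$-representation, then to idempotents of $\C(\im1)$ via the $\gamma_1$-representation, and finally matching the sixteen pairs $(A,B)$ with the listed elements---is the natural one given the idempotent machinery the paper sets up in \eqref{eq:idemp BC} and \eqref{eq:idemp tr}, and all of your identifications check out (in particular the computation $\jh1\jh3=-\jh2$ underlying the anti-diagonal case).
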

The different types of conjugation of a tricomplex number have been studied in \cite{GarantPelletier,GarantRochon}. More precisely, it is shown that any tricomplex number $\eta$ has seven different conjugates, denoted $\eta^{\ddagger_{i}},\,i=1,\dotsc,7$ (see Figure \ref{eq:7conj}), and that their composition (together with the identity conjugation) forms a group isomorphic to $\mathbb{Z}_{2}^{3}$. Analogous results in $\Mu{n}$, $n\geq1$ are also presented. In addition, the authors of \cite{Holomorphy} provided a different but equivalent definition of conjugation valid in the general case.
%
%
\begin{figure}[h]
\begin{equation*}
\begin{aligned}
\eta^{\ddagger_{1}} &:= z_1 + z_2\im2 - z_3\im3 - z_4\jh3\\
\eta^{\ddagger_{2}} &:= \overline{z_1} + \overline{z_2}\im2 + \overline{z_3}\im3 + \overline{z_4}\jh3\\
\eta^{\ddagger_{3}} &:= z_1 - z_2\im2 + z_3\im3 - z_4\jh3\\
\eta^{\ddagger_{4}} &:= \overline{z_1} - \overline{z_2}\im2 + \overline{z_3}\im3 - \overline{z_4}\jh3\\
\eta^{\ddagger_{5}} &:= \overline{z_1} + \overline{z_2}\im2 - \overline{z_3}\im3 - \overline{z_4}\jh3\\
\eta^{\ddagger_{6}} &:= z_1 - z_2\im2 - z_3\im3 + z_4\jh3\\
\eta^{\ddagger_{7}} &:= \overline{z_1} - \overline{z_2}\im2 - \overline{z_3}\im3 + \overline{z_4}\jh3
\end{aligned}
\end{equation*}
\caption{The seven tricomplex conjugates.}
\label{eq:7conj}
\end{figure}
%
%
%
%
%
%
%
Nevertheless, one aspect of the theory that currently lacks understanding in the multicomplex setting $\Mu{n},n\geq3$ is the relationship between conjugation and invertibility. Indeed, although Price \cite{Baley} proposed various invertibility criteria based on the use of idempotent representations or Cauchy-Riemann matrices, none of these methods involve multicomplex conjugates, unlike the well known formula $z^{-1}=\frac{\overline{z}}{{|z|}^{2}}$ in the complex plane. The next new results, valid in the tricomplex case, represent a first step in this direction.\footnote{The reference \cite{Vallieres} contains two conjectures extending these results to $\Mu{n},n\geq1$.}
\begin{proposition}\label{prop:produit reel}
Let $\eta\in\TC$. The multiplication of $\eta$ by its seven different conjugates always equals a non-negative real number. In other words,
\[
\eta\eta^{\ddagger_{1}}\eta^{\ddagger_{2}}\eta^{\ddagger_{3}}\eta^{\ddagger_{4}}\eta^{\ddagger_{5}}\eta^{\ddagger_{6}}\eta^{\ddagger_{7}}=\eta\cdot\prod_{i=1}^{7}{\eta^{\ddagger_{i}}}\in\R_{\geq0}.
\]
\end{proposition}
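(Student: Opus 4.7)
The plan is to exploit the group structure of the conjugations, $\{e, \ddagger_1, \dotsc, \ddagger_7\} \cong \Z_2^3$ recalled above, together with two successive pairings, in order to collapse the eight-factor product down to a squared complex modulus. Each involution $\ddagger_i$ is determined by the subset of $\{\im1, \im2, \im3\}$ whose signs it flips, and the coset structure of $\Z_2^3$ then lets one fold the product onto increasingly small subalgebras.

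Concretely, I would first write $\eta = Z_1 + Z_2\im1$ with $Z_1, Z_2$ lying in the four-dimensional subalgebra $\mS := \vspan\{1, \im2, \im3, \jh3\}$, which is exactly the fixed subring of $\ddagger_2$ and is isomorphic to $\BC$. Then
\[
P := \eta\cdot\eta^{\ddagger_2} = (Z_1+Z_2\im1)(Z_1-Z_2\im1) = Z_1^2 + Z_2^2 \in \mS.
\]
Reading off the action of each $\ddagger_i$ on $\im1, \im2, \im3$ yields the coset identities $\ddagger_5 = \ddagger_1\ddagger_2$, $\ddagger_4 = \ddagger_3\ddagger_2$, and $\ddagger_7 = \ddagger_6\ddagger_2$, so applying $\ddagger_1$, $\ddagger_3$ and $\ddagger_6$ to the defining equality of $P$ gives
\[
\eta\cdot\prod_{i=1}^{7}\eta^{\ddagger_i} = P\cdot P^{\ddagger_1}\cdot P^{\ddagger_3}\cdot P^{\ddagger_6}.
\]

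I would then iterate the same trick inside the bicomplex-like subalgebra $\mS$: writing $P = U_1 + U_2\im3$ with $U_1, U_2 \in \C(\im2)$, one finds $P\cdot P^{\ddagger_1} = U_1^2 + U_2^2 \in \C(\im2)$. Since $\ddagger_6 = \ddagger_3\ddagger_1$ and $\ddagger_3$ restricts to ordinary complex conjugation on $\C(\im2)$, this gives $P^{\ddagger_3}\cdot P^{\ddagger_6} = \overline{U_1^2+U_2^2}$, and therefore
\[
\eta\cdot\prod_{i=1}^{7}\eta^{\ddagger_i} = (U_1^2+U_2^2)\,\overline{(U_1^2+U_2^2)} = |U_1^2+U_2^2|^2 \geq 0,
\]
which is the claimed non-negative real number.

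The main bookkeeping to get right is tracking the action of each $\ddagger_i$ on the hyperbolic units $\jh1, \jh2, \jh3$ and on the products $\im{j}\im{k}$: this is what underlies both the coset identities used to pair up factors and the nesting $\R\subset\C(\im2)\subset\mS\subset\TC$ of fixed subrings along the chain $\{e\}\subset\langle\ddagger_2\rangle\subset\langle\ddagger_1, \ddagger_2\rangle\subset G$. These verifications are elementary but must be carried out carefully so that each successive partial product genuinely lies in the smaller subalgebra claimed before the next pairing can be invoked.
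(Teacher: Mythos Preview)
Your argument is correct. The verification that $\mS=\vspan\{1,\im2,\im3,\jh3\}$ is a subring (isomorphic to $\BC$), that $\ddagger_2$ fixes it pointwise, and that the three coset identities $\ddagger_5=\ddagger_1\ddagger_2$, $\ddagger_4=\ddagger_3\ddagger_2$, $\ddagger_7=\ddagger_6\ddagger_2$ hold, all follow immediately from reading off which of $\im1,\im2,\im3$ each $\ddagger_i$ negates in the table of conjugates. Since every $\ddagger_i$ is an $\R$-algebra automorphism and the group is abelian, the pairing steps and the reduction $P\cdot P^{\ddagger_1}\cdot P^{\ddagger_3}\cdot P^{\ddagger_6}=|U_1^2+U_2^2|^2$ go through exactly as you wrote.

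By contrast, the paper does not actually present a proof: it only remarks that a brute-force expansion works and points to \cite{Vallieres} for a ``more refined'' argument. Your approach is presumably close in spirit to that refined proof, exploiting the filtration $\R\subset\C(\im2)\subset\mS\subset\TC$ of fixed subrings under the chain $\{e\}\subset\langle\ddagger_3\rangle\subset\langle\ddagger_1,\ddagger_3\rangle\subset\Z_2^3$. What this buys over brute force is transparency---one sees immediately why the result is a squared modulus---and it generalizes cleanly: the same coset-pairing idea shows that in $\Mu{n}$ the product of $\eta$ with its $2^n-1$ conjugates is a non-negative real number, by induction on $n$.
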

\begin{proof}
A brute force approach is to perform straightforward but tedious calculations, which ultimately lead to the desired result. However, a more refined proof is available in \cite{Vallieres}.
\end{proof}
\begin{theorem}
Let $\eta\in\TC$. Then, $\eta$ is invertible if and only if
\[
\eta\cdot\prod_{i=1}^{7}{\eta^{\ddagger_{i}}}\neq 0.
\]
Moreover, when $\eta$ is invertible, its multiplicative inverse is given by the formula
\[
\eta^{-1}=\frac{\eta^{\ddagger_{1}}\eta^{\ddagger_{2}}\eta^{\ddagger_{3}}\eta^{\ddagger_{4}}\eta^{\ddagger_{5}}\eta^{\ddagger_{6}}\eta^{\ddagger_{7}}}{\eta\cdot\prod_{i=1}^{7}{\eta^{\ddagger_{i}}}}.
\]
\end{theorem}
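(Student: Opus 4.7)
The plan is to leverage \autoref{prop:produit reel}, which already guarantees that the quantity $r := \eta \cdot \prod_{i=1}^{7} \eta^{\ddagger_i}$ is always a non-negative real number. Once this is in hand, the theorem essentially reduces to standard manipulations in a commutative ring, with the two implications handled separately.

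For the sufficiency direction ($\Leftarrow$), I would simply verify that the proposed formula does produce a multiplicative inverse. Assuming $r \neq 0$, the scalar $r$ is invertible in $\TC$ (a nonzero real number commutes with every tricomplex number and has its usual reciprocal in $\R$), so division by $r$ is legitimate. Setting $P := \prod_{i=1}^{7} \eta^{\ddagger_i}$, a direct computation
\[
\eta \cdot \frac{P}{r} = \frac{\eta P}{r} = \frac{r}{r} = 1
\]
would then establish both that $\eta$ is invertible and that its inverse is given by the claimed formula.

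For the necessity direction ($\Rightarrow$), I would argue by contrapositive: assume $r = 0$ and deduce that $\eta$ cannot be invertible. The key ingredient is that each of the seven conjugations $\ddagger_i$ is a ring automorphism of $\TC$ (this follows from the structure developed in \cite{GarantPelletier,GarantRochon}, or can be checked directly from the coordinate formulas in Figure \ref{eq:7conj}). Consequently, if $\eta$ were invertible, then every $\eta^{\ddagger_i}$ would be invertible as the image of a unit under a ring automorphism. Since $\TC$ is commutative, the product $P$ would then be a product of units, hence itself a unit, and $r = \eta \cdot P$ would be a product of units — contradicting $r=0$.

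The main obstacle, I expect, is not analytic but a matter of rigor: one must carefully invoke the fact that each individual $\ddagger_i$ is a ring homomorphism, and not merely that $\{\mathrm{id}, \ddagger_1, \dotsc, \ddagger_7\}$ is a group under composition. This can be discharged either by appealing to the structural results in the cited literature or by a short direct check on the coordinates of Figure \ref{eq:7conj}, observing that each $\ddagger_i$ is a composition of the three fundamental involutions (partial conjugations in $\im{1}$, $\im{2}$, and $\im{3}$), each of which is readily seen to preserve products.
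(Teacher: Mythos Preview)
Your proof is correct and follows the same route as the paper, whose entire proof reads ``This is a direct consequence of proposition \ref{prop:produit reel}.'' In fact, you supply more detail than the paper does---particularly in the necessity direction, where your explicit appeal to the ring-automorphism property of the conjugations $\ddagger_i$ makes precise what the paper leaves implicit.
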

\begin{proof}
This is a direct consequence of proposition \ref{prop:produit reel}.
\end{proof}
\noindent It is rather interesting to note that these results also uncover a link between conjugation and Cauchy-Riemann matrices, as the multiplication of any $\eta\in\TC$ by its seven conjugates equals the determinant of its Cauchy-Riemann matrix with real coefficients \cite{Vallieres}. 
Furthermore, we have the following corollary.
\begin{corollary}
Any non-zero tricomplex number is a zero divisor if and only if it cancels out with one of its conjugates or a product of these.
\end{corollary}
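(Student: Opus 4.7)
The plan is to derive the corollary from the theorem by combining it with the group structure of the seven tricomplex conjugations. The reverse implication is immediate: if $\eta \cdot P = 0$ for some non-zero $P$ that is a conjugate or a product of conjugates of $\eta$, then by definition $\eta$ is a zero divisor.

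For the forward implication, suppose $\eta \neq 0$ is a zero divisor. Since $\TC$ is a finite-dimensional commutative $\mathbb{R}$-algebra, the multiplication-by-$\eta$ map on $\TC$ fails to be injective, hence fails to be bijective, so $\eta$ is not invertible. The theorem then gives $\eta \cdot Q = 0$ for $Q := \prod_{i=1}^{7} \eta^{\ddagger_i}$. If $Q \neq 0$, we simply take $P := Q$ and conclude.

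The main obstacle is the case $Q = 0$, which is in fact the generic situation when $\eta$ is a zero divisor, as one can verify via the idempotent decomposition of $\TC$ into four copies of $\mathbb{C}$. To treat this case, I would select a minimal non-empty subset $S_0 \subseteq \{1, \ldots, 7\}$ such that $\prod_{i \in S_0} \eta^{\ddagger_i} = 0$. Each conjugation $\ddagger_i$ is an involutive $\mathbb{R}$-algebra automorphism of $\TC$ (it is a sign change on a subset of the imaginary units, and the collection forms the group $\mathbb{Z}_2^3$ recalled earlier), so $\eta \neq 0$ forces $\eta^{\ddagger_i} \neq 0$ for every $i$, hence $|S_0| \geq 2$. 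Fix $j_0 \in S_0$ and set $P_0 := \prod_{i \in S_0 \setminus \{j_0\}} \eta^{\ddagger_i}$; by minimality, $P_0 \neq 0$ while $P_0 \cdot \eta^{\ddagger_{j_0}} = 0$.

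Applying the automorphism $\ddagger_{j_0}$ to this last identity yields $P_0^{\ddagger_{j_0}} \cdot \eta = 0$ with $P_0^{\ddagger_{j_0}} \neq 0$. Moreover, $P_0^{\ddagger_{j_0}} = \prod_{i \in S_0 \setminus \{j_0\}} \eta^{\ddagger_i \circ \ddagger_{j_0}}$, and for each $i \neq j_0$ the composition $\ddagger_i \circ \ddagger_{j_0}$ is a non-identity element of $\mathbb{Z}_2^3$ and therefore coincides with some $\ddagger_{k(i)}$ among the seven conjugations. Hence $P := P_0^{\ddagger_{j_0}}$ is a non-zero product of conjugates of $\eta$ satisfying $\eta \cdot P = 0$, which is exactly the statement required.
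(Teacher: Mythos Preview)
Your argument is correct. The paper offers no proof for this corollary, so the intended reasoning is presumably the one-line deduction from the theorem: a non-zero $\eta$ is a zero divisor if and only if it is non-invertible, and by the theorem this happens exactly when $\eta\cdot\prod_{i=1}^{7}\eta^{\ddagger_i}=0$, i.e.\ $\eta$ cancels with the full product of its seven conjugates.

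Your proof goes further than that. By introducing the minimal subset $S_0$ and then transporting the resulting relation through the automorphism $\ddagger_{j_0}$, you guarantee that $\eta$ annihilates a \emph{non-zero} product of conjugates, not merely the possibly-vanishing full product $Q$. This is a genuine refinement: under the natural reading of ``cancels out'' as annihilation by a non-zero element, the paper's implicit argument leaves exactly the gap you close. The group structure of the conjugations (every composition $\ddagger_{j_0}\circ\ddagger_i$ with $i\neq j_0$ is again one of the seven non-trivial conjugations, since in $\mathbb{Z}_2^3$ two distinct non-identity elements never multiply to the identity) is precisely what makes the transported product $P_0^{\ddagger_{j_0}}$ again a product of conjugates of $\eta$, so the conclusion lands inside the statement as phrased. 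Your approach therefore yields a slightly stronger and cleaner statement at the cost of a short extra paragraph.
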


\subsection{Idempotent representations of a tricomplex number}\label{subsec:reps}
Let $n\geq2$ and consider the algebra $\Mu{n}$. Then, as in \cite{BrouilletteRochon,GarantRochon,Holomorphy,vajiac}, one can use the procedure developed by Price \cite{Baley} to construct $n-1$ sets of idempotent elements, each forming an orthogonal \textit{basis} of the space $\Mu{n}$, and find an idempotent representation in each of those bases. Thus, in the tricomplex case, two idempotent representations can be obtained this way. First, we consider the elements $\tr$ and $\tc$, to which corresponds the identity \cite{Baley}
\begin{equation}\label{eq:idemp tr}
\eta = (\eta_1-\eta_2\im2)\tr + (\eta_1+\eta_2\im2)\tc
\end{equation}
that we will call the \tr-representation of $\eta=\eta_1+\eta_2\im3\in\TC$. The second representation can be derived from the first by noticing that the two idempotent components in \eqref{eq:idemp tr} are bicomplex numbers. Hence, writing $\eta_1=\eta_{11}+\eta_{12}\im2,\,\eta_2=\eta_{21}+\eta_{22}\im2$ and using \eqref{eq:idemp BC} yields
\begin{equation}\label{eq:idemp 4}
\eta = \eta_{\ut}\cdot\ut + \eta_{\uct}\cdot\uct + \eta_{\utc}\cdot\utc + \eta_{\uctc}\cdot\uctc
\end{equation}
where
\begin{align*}
\eta_{\ut} &= (\eta_{11}+\eta_{22}) - (\eta_{12}-\eta_{21})\im1;\\
\eta_{\uct} &= (\eta_{11}+\eta_{22}) + (\eta_{12}-\eta_{21})\im1;\\
\eta_{\utc} &= (\eta_{11}-\eta_{22}) - (\eta_{12}+\eta_{21})\im1;\\
\eta_{\uctc} &= (\eta_{11}-\eta_{22}) + (\eta_{12}+\eta_{21})\im1.
\end{align*}
Remark that every idempotent component above is a complex number: thus, this process cannot be repeated. However, there is no pretense of completeness regarding the $n-1$ sets of idempotent elements obtained by applying Price's method. For example, in the algebra of tricomplex numbers, \autoref{thm:16 idempotents} states that there exists fourteen non-trivial idempotent elements, whereas only six of them are used in identities \eqref{eq:idemp tr} and \eqref{eq:idemp 4}. This suggests that additional representations involving other idempotent elements exist in $\TC$. Indeed, by solving the system of linear equations associated to the equality $\eta=a\dx + b\dc$ where $a,b\in\BC$, one obtains the solution $\{a=\eta_1-\eta_2\im1,b=\eta_1+\eta_2\im1\}$, which leads to the identity
\begin{equation}\label{eq:idemp de}
\eta = (\eta_1-\eta_2\im1)\dx + (\eta_1+\eta_2\im1)\dc.
\end{equation}

Note that the properties of the idempotents elements involved in identities \eqref{eq:idemp tr} to \eqref{eq:idemp de} ensure that the algebraic operations in $\TC$ can be carried out term by term in these representations. More generally, this is also true for the idempotent representations in $\Mu{n}$ for $n\geq2$, and in fact, this property is the cornerstone of several important results related to generalized Mandelbrot and filled-in Julia sets \cite{ClaudiaRochon, BrouilletteRochon,GarantRochon}. In a similar fashion, representations \eqref{eq:idemp BC} to \eqref{eq:idemp de} will, most of the time, be the key to establish the results presented in sections \ref{sec:geometrical} and \ref{sec:cube}.

\section{Geometrical characterizations}\label{sec:geometrical}
\subsection{The tricomplex Mandelbrot set $\M3$}
We begin this section by recalling relevant definitions introduced in \cite{BrouilletteRochon,GarantRochon}. Let $P_{c}(\eta)=\eta^2+c$ and denote
\[
P_{c}^{(n)}(\eta)=\underbrace{(P_{c}\circ P_{c}\circ\cdots\circ P_{c})}_{n\text{ times}}(\eta).
\]
It follows that the classical Mandelbrot set can be defined as
\[
\M{1} = \{c\in\Mu{1} : \suite\text{ is bounded}\}.
\]
If we set $c\in\Mu{2}$ or $c\in\Mu{3}$ instead, we have the following generalizations.
\begin{definition}
The bicomplex Mandelbrot set is defined as
\[
\M{2}=\{c\in\BC:\suite\text{ is bounded}\}.
\]
\end{definition}
\begin{definition}
The tricomplex Mandelbrot set is defined as
\[
\M{3}=\{c\in\TC:\suite\text{ is bounded}\}.
\]
\end{definition}
The bicomplex Mandelbrot set was first studied by Rochon in \cite{Rochon1} while its tricomplex analog was introduced in \cite{GarantRochon}. In addition, these generalizations are particular cases of the multicomplex Multibrot sets studied in \cite{BrouilletteRochon}. For the sake of clarity and continuity, we will use the same subspaces and notation as those in the last two references. Now, since $\M3$ is an eight-dimensional object, we are only able to visualize its various projections on tridimensional subspaces of \TC, called 3D slices. This brings us to the following definitions.
\begin{definition}
Let $\im{k},\im{l},\im{m}\in\{1,\im1,\im2,\im3,\im4,\jh1,\jh2,\jh3\}$ with $\im{k}\neq\im{l},\,\im{k}\neq\im{m}$ and $\im{l}\neq\im{m}$. The space
\[
\mT(\im{k},\im{l},\im{m}) := \vspan\{\im{k},\im{l},\im{m}\}
\]
is the vector subspace of $\TC$ consisting of all real finite linear combinations of these three distinct units.
\end{definition}
\begin{definition}
Let $\im{k},\im{l},\im{m}\in\{1,\im1,\im2,\im3,\im4,\jh1,\jh2,\jh3\}$ with $\im{k}\neq\im{l},\,\im{k}\neq\im{m}$ and $\im{l}\neq\im{m}$. We define a principal 3D slice of the tricomplex Mandelbrot set $\M3$ as
\begin{align*}
\T(\im{k},\im{l},\im{m}) &= \{c\in\mT(\im{k},\im{l},\im{m}) : \suite\text{ is bounded}\} \\
&= \mT(\im{k},\im{l},\im{m})\cap\M{3}.
\end{align*}
\end{definition}
As noted earlier, the fifty-six 3D slices corresponding to the various combinations of $\im{k},\im{l},\im{m}\in\{1,\im1,\im2,\im3,\im4,\jh1,\jh2,\jh3\}$ have been classified according to their dynamics (and, consequently, their appearance in visualization software) in \cite{GarantRochon}. \autoref{fig:8 slices} illustrates the eight principal 3D slices of the tricomplex Mandelbrot set (power 2) resulting from this characterization \cite{BrouilletteRochon}.
\begin{figure}[ht]
\centering
	\begin{subfigure}{0.243\textwidth}
	\centering
	\includegraphics[width=2.9cm]{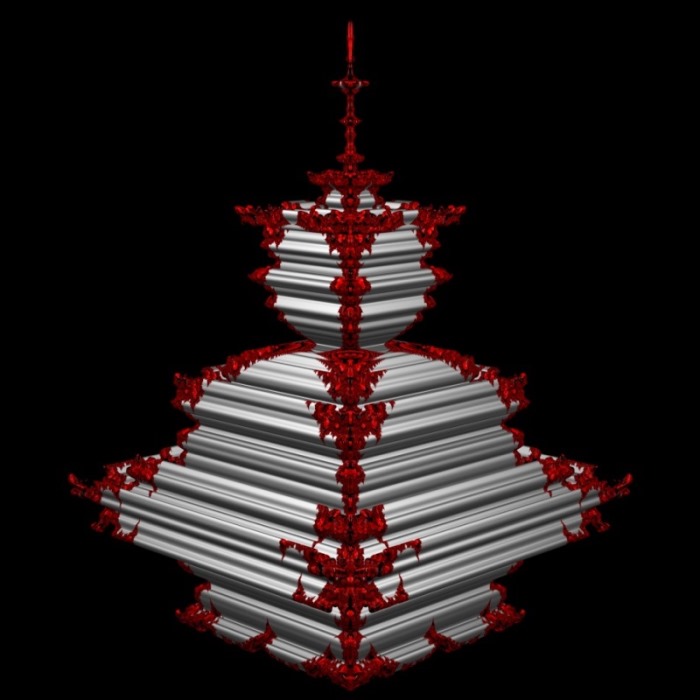}
	\caption{$\T(1,\im1,\im2)$}
	\label{fig:tetrabrot}
	\end{subfigure}
\hfill
	\begin{subfigure}{0.243\textwidth}
	\centering
	\includegraphics[width=2.9cm]{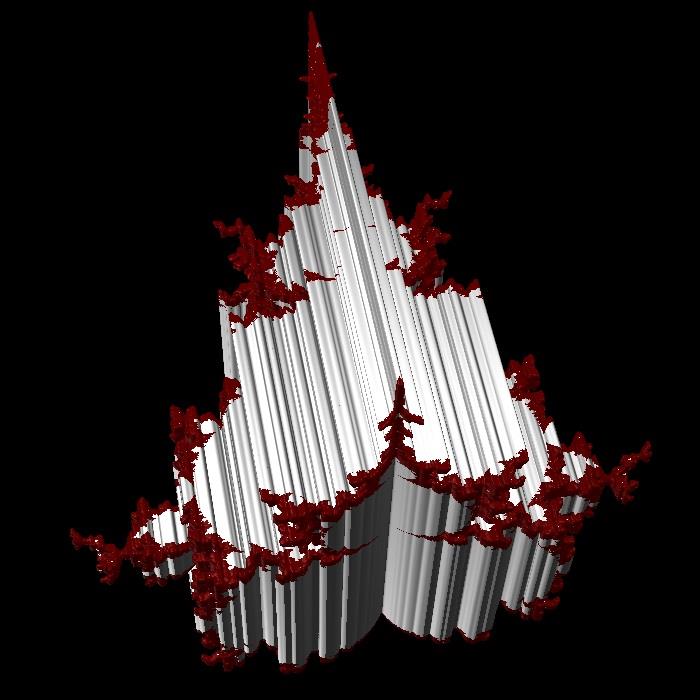}
	\caption{$\T(1,\im1,\jh1)$}
	\label{fig:arrowheadbrot}
	\end{subfigure}
\hfill
	\begin{subfigure}{0.243\textwidth}
	\centering
	\includegraphics[width=2.9cm]{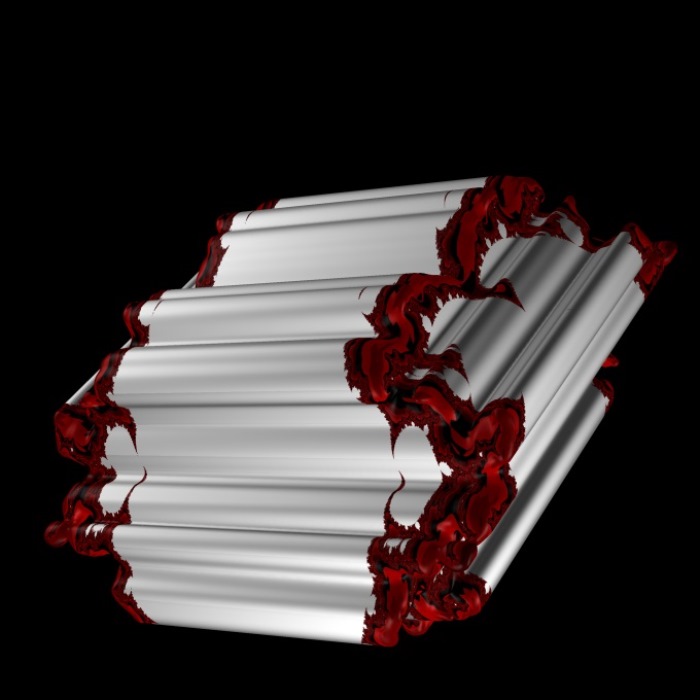}
	\caption{$\T(\im1,\im2,\jh1)$}
	\label{fig:mousebrot}
	\end{subfigure}
\hfill
	\begin{subfigure}{0.243\textwidth}
	\centering
	\includegraphics[width=2.9cm]{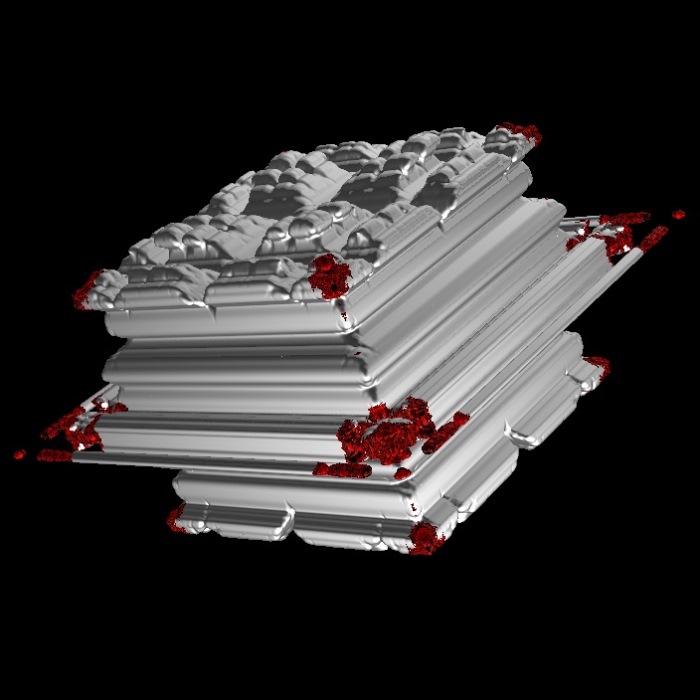}
	\caption{$\T(\im1,\im2,\jh2)$}
	\label{fig:turtlebrot}
	\end{subfigure}
\\ \vspace*{5pt}
	\begin{subfigure}{0.243\textwidth}
	\centering
	\includegraphics[width=2.9cm]{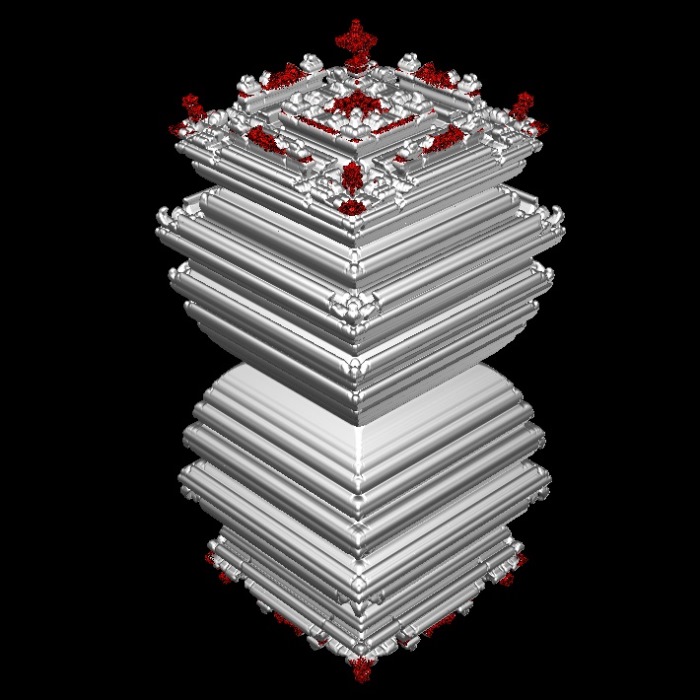}
	\caption{$\T(\im1,\jh1,\jh2)$}
	\label{fig:hourglassbrot}
	\end{subfigure}
\hfill
	\begin{subfigure}{0.243\textwidth}
	\centering
	\includegraphics[width=2.9cm]{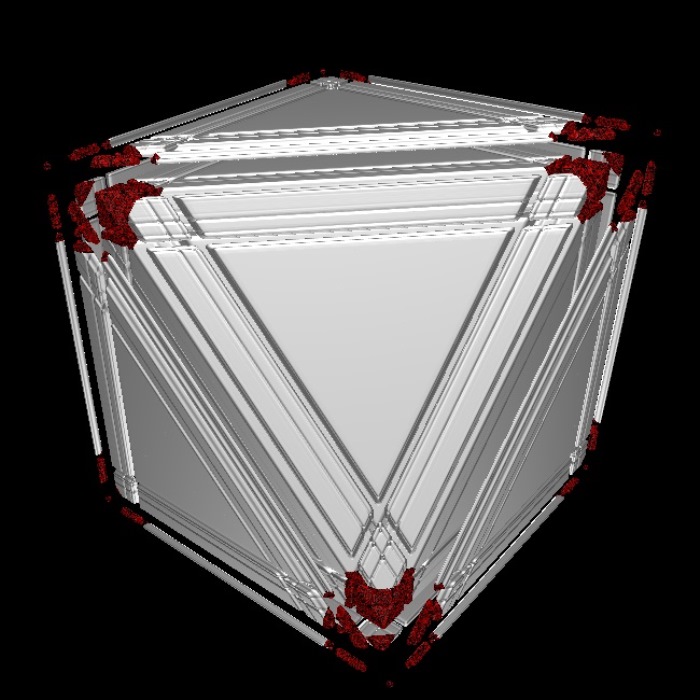}
	\caption{$\T(\im1,\im2,\im3)$}
	\label{fig:metabrot}
	\end{subfigure}
\hfill
	\begin{subfigure}{0.243\textwidth}
	\centering
	\includegraphics[width=2.9cm]{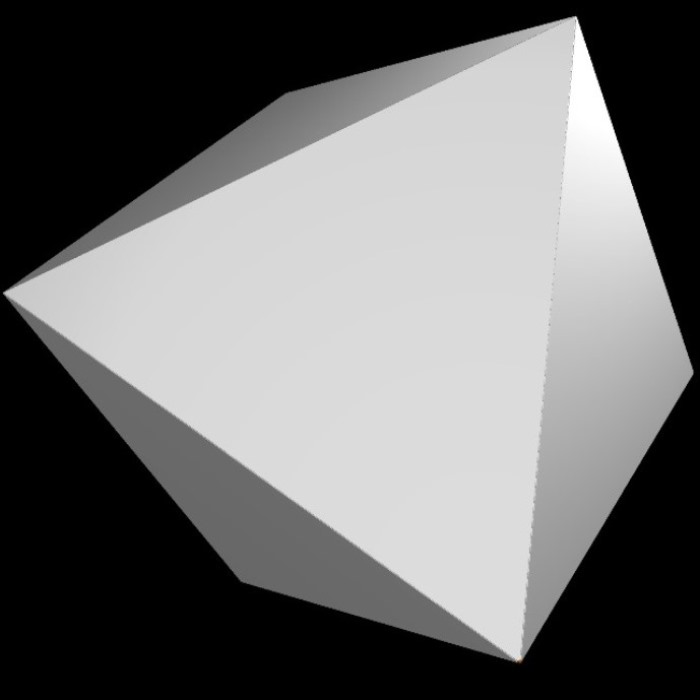}
	\caption{$\T(1,\jh1,\jh2)$}
	\label{fig:airbrot}
	\end{subfigure}
\hfill
	\begin{subfigure}{0.243\textwidth}
	\centering
	\includegraphics[width=2.9cm]{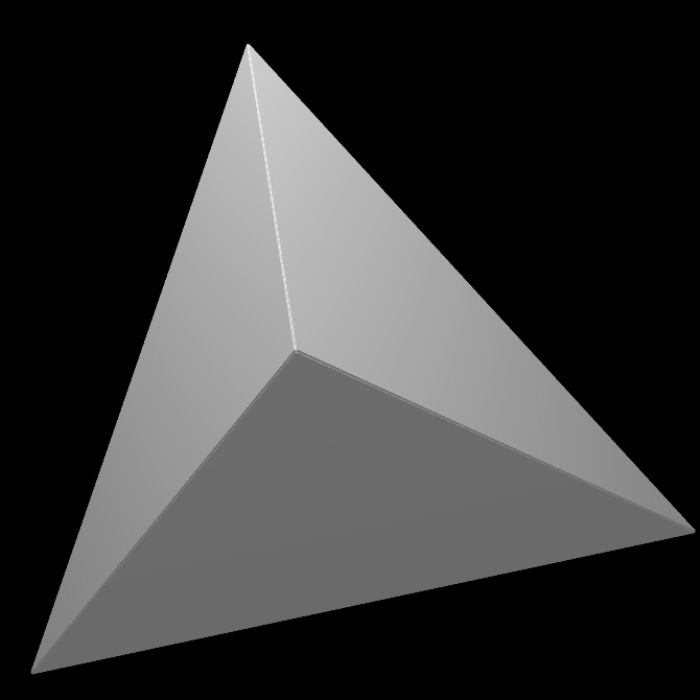}
	\caption{$\T(\jh1,\jh2,\jh3)$}
	\label{fig:firebrot}
	\end{subfigure}
\caption{The eight principal 3D slices of $\M3$.}
\label{fig:8 slices}
\end{figure}
Among these are two principal 3D slices called the Tetrabrot (Fig.\ref{fig:tetrabrot}) and the Airbrot (Fig.\ref{fig:airbrot}), for which geometrical characterizations have been respectively developed in \cite{Rochon1} and \cite{GarantRochon}. For the latter, doing so indirectly proved that the Airbrot is a regular octahedron, thus confirming the presence of a first Platonic solid within tricomplex dynamics.

\subsection{Characterizations of the principal 3D slices}
Essentially, we want to elaborate similar characterizations for the remaining principal slices in order to confirm or explain some of their properties. One notable example is the fact that the Firebrot (Fig.\ref{fig:firebrot}) is conjectured to be a regular tetrahedron \cite{GarantPelletier}. This assertion will be the subject of \autoref{thm:tetraedre}. Unless otherwise stated, the following results are an excerpt of those established in \cite{Vallieres}.
\begin{proposition}[See \cite{Rochon1}]\label{Tetrabrot}
The Tetrabrot can be characterized as follows:
\[
\mathcal{T}(1,\im1,\im2) = \underset{y\in [-m,m]}{\bigcup}\{[(\M1-y\im1)\cap(\M1+y\im1)] + y\im2\}
\]
where
\[
m:=\sup\{q\in\mR:\exists p\in\mR\text{ such that }p+q\im1\in\M1\}.
\]
\end{proposition}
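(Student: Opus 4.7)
The plan is to apply the bicomplex idempotent representation \eqref{eq:idemp BC} in order to collapse membership in $\M3$ for points of $\mT(1,\im1,\im2)$ into two independent conditions in the classical Mandelbrot set $\M1$. First, because $\mT(1,\im1,\im2)\subset\BC$ and $\BC$ is closed under $P_c$ whenever $c\in\BC$, the condition $c\in\M3$ is equivalent to $c\in\M2$, so the whole argument can be carried out inside the bicomplex subalgebra.

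Writing $c=x_1+x_2\im1+y\im2$ as $z_1+z_2\im2$ with $z_1=x_1+x_2\im1$ and $z_2=y$, identity \eqref{eq:idemp BC} produces
\[
c=\bigl(x_1+(x_2-y)\im1\bigr)\un+\bigl(x_1+(x_2+y)\im1\bigr)\uc.
\]
Since $\un\uc=0$ and $\un+\uc=1$, squaring and addition in $\BC$ act term by term along $\{\un,\uc\}$, so the full orbit $\{P_c^n(0)\}$ splits into two uncoupled complex orbits with parameters $c_1=x_1+(x_2-y)\im1$ and $c_2=x_1+(x_2+y)\im1$. Invoking the standard fact (already used in \cite{Rochon1}) that boundedness in $\BC$ is equivalent to simultaneous boundedness in each idempotent component, I obtain $c\in\M2$ iff $c_1\in\M1$ and $c_2\in\M1$, which rewrites as $x_1+x_2\im1\in(\M1+y\im1)\cap(\M1-y\im1)$.

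Adding back the $y\im2$ coordinate and taking the union over $y\in\R$ gives a tentative identity whose index set is all of $\R$ rather than $[-m,m]$. To shrink the range, I would use the invariance of $\M1$ under complex conjugation, which confines $\M1$ to the horizontal strip $\R\times[-m,m]$. The translates $\M1+y\im1$ and $\M1-y\im1$ then lie in the strips $\R\times[-m+y,m+y]$ and $\R\times[-m-y,m-y]$ respectively, and these have empty intersection as soon as $|y|>m$; the empty slices contribute nothing to the union and can be dropped, producing the desired expression.

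The main obstacle is not computational but conceptual: one must justify that the idempotent decomposition really is preserved under the Mandelbrot iteration and that bicomplex boundedness is equivalent to component-wise boundedness. Both facts follow from the multiplicativity of the $\{\un,\uc\}$-representation together with the equivalence of the natural norm on $\BC$ with the maximum of the moduli of its two idempotent components, which is precisely the technical ingredient supplied in \cite{Rochon1}. Everything remaining is elementary translation geometry in the complex plane.
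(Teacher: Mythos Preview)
Your argument is correct and follows the same approach as the paper: the paper does not spell out a proof of this proposition (it simply refers to \cite{Rochon1}), but your use of the idempotent representation \eqref{eq:idemp BC} to split the bicomplex orbit into two complex orbits, followed by the translation-geometry argument to restrict the index set to $[-m,m]$, is precisely the template the paper employs in the proof of Proposition~\ref{Arrowhead} immediately afterward, so there is nothing materially different.
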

\begin{proposition}\label{Arrowhead}
The principal 3D slice $\T(1,\im1,\jh1)$, named the Arrowheadbrot (Fig.\ref{fig:arrowheadbrot}), can be expressed as follows:
\[
\mathcal{T}(1,\im1,\jh1) = \underset{y\in \left[-\frac{9}{8},\frac{9}{8}\right]}{\bigcup}\{[(\M1-y)\cap(\M1+y)] + y\jh1\}.
\]
\end{proposition}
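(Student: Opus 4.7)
The plan is to use the bicomplex idempotent representation \eqref{eq:idemp BC} to diagonalize the quadratic iteration on $\mT(1,\im1,\jh1)$ into two decoupled complex iterations, and then to translate the resulting boundedness conditions into a statement about the classical Mandelbrot set $\M1$. Since every $c = x + a\im1 + y\jh1 \in \mT(1,\im1,\jh1)$ belongs to $\BC \subset \TC$, I will first rewrite it as $c = (x+a\im1) + (y\im1)\im2$ and apply \eqref{eq:idemp BC}, which yields
\[
c = \bigl[(x+y) + a\im1\bigr]\un + \bigl[(x-y) + a\im1\bigr]\uc =: c_{1}\un + c_{2}\uc,
\]
with $c_{1}, c_{2} \in \C(\im1)$.

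Because multiplication in the idempotent basis acts coordinate-wise ($\un^{2}=\un$, $\uc^{2}=\uc$, $\un\uc=0$), a routine induction on $n$ will give $P_{c}^{n}(0) = P_{c_{1}}^{n}(0)\un + P_{c_{2}}^{n}(0)\uc$ for every $n \in \N$. Since the real-linear map $(c_{1},c_{2}) \mapsto c_{1}\un + c_{2}\uc$ is an isomorphism from $\C(\im1)^{2}$ onto $\BC$ and all norms on a finite-dimensional space are equivalent, the sequence $\suite$ will be bounded in $\TC$ if and only if both complex sequences $\{P_{c_{1}}^{n}(0)\}_{n\in\N}$ and $\{P_{c_{2}}^{n}(0)\}_{n\in\N}$ are bounded in $\C(\im1)$. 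This produces the core equivalence
\[
c \in \T(1,\im1,\jh1) \iff c_{1} \in \M1 \text{ and } c_{2} \in \M1.
\]

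Unfolding the definitions of $c_{1}$ and $c_{2}$, these two membership conditions rewrite as $x+a\im1 \in \M1-y$ and $x+a\im1 \in \M1+y$, i.e., $c - y\jh1 \in (\M1-y)\cap(\M1+y)$, which already produces the announced union formula; the final step will be to justify the restriction $y \in [-9/8,\,9/8]$. The intersection $(\M1-y)\cap(\M1+y)$ is non-empty precisely when there exist two points of $\M1$ with identical imaginary parts whose real parts differ by exactly $2y$, so its non-emptiness is controlled by the horizontal extent of $\M1$. This last calibration is where I expect the main obstacle to lie: invoking the classical horizontal bound $\mathrm{Re}(\M1) \subset [-2,\,1/4]$ will force $|2y| \leq 9/4$, hence $|y| \leq 9/8$; conversely, any $y \in [-9/8,\,9/8]$ is realized on the real axis by choosing $c_{1},c_{2} \in \M1 \cap \R = [-2,\,1/4]$ with $c_{1} - c_{2} = 2y$ (for instance, $c_{1} = -2+2y$ and $c_{2} = -2$ when $y \geq 0$), which completes the characterization.
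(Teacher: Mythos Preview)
Your overall strategy coincides with the paper's: decompose $c$ via the idempotents $\un,\uc$, reduce membership in $\T(1,\im1,\jh1)$ to the pair of conditions $(x+a\im1)\pm y\in\M1$, rewrite this as $c-y\jh1\in(\M1-y)\cap(\M1+y)$, and then argue that the parameter $y$ may be restricted to $\left[-\tfrac98,\tfrac98\right]$. Up through the union-over-$\R$ formula your argument is correct and essentially identical to the paper's (the paper writes $d=c_1+c_2\im1$ where you write $x+a\im1$).

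The problem lies in your final calibration step. You invoke ``the classical horizontal bound $\mathrm{Re}(\M1)\subset[-2,\,1/4]$'', but this inclusion is \emph{false}. The boundary of the main cardioid is parameterized by $c=\tfrac12 e^{i\theta}-\tfrac14 e^{2i\theta}$; taking $\theta=\pi/3$ gives $c=\tfrac38+\tfrac{\sqrt3}{8}\,\im1\in\M1$, whose real part is $\tfrac38>\tfrac14$. Hence your deduction that two points of $\M1$ at the same height can differ horizontally by at most $\tfrac94$ does not follow from the bound you state. The paper does not make this claim; it cites only the weaker (and true) fact $\M1\cap\R=\left[-2,\tfrac14\right]$ and then asserts that the emptiness of $(\M1-y)\cap(\M1+y)$ for $|y|>\tfrac98$ can be ``inferred'' from it --- an inference that is itself terse, since the real trace alone does not obviously control horizontal widths at non-zero imaginary heights. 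So at this one spot both arguments are brief, but yours rests on an explicitly incorrect statement and should be repaired. Your converse direction (non-emptiness for every $|y|\le\tfrac98$, witnessed on the real axis) is correct and in fact more explicit than what the paper writes.
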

\begin{proof}
Let $c\in\T(1,\im1,\jh1)$. Then, $c=c_{1} + c_{2}\im1 + c_{4}\jh1$ and by using identity \eqref{eq:idemp BC}, we can write
\begin{align*}
c &= c_{1} + c_{2}\im1 + (c_{4}\im1)\im2 \\
&= (c_{1} + c_{2}\im1 +  c_{4})\un + (c_{1} + c_{2}\im1 - c_{4})\uc \\
&= (d + c_{4})\un + (d - c_{4})\uc,
\end{align*}
where $d=c_{1} + c_{2}\im1 \in\Co(\im1)$. By remarking that $\suite$ is bounded if and only if $\{P_{d+c_4}^{n}(0)\}_{n\in\N}$ and $\{P_{d-c_4}^{n}(0)\}_{n\in\N}$ are bounded, we deduce that $c\in\T(1,\im1,\jh1)$ if and only if $d\pm c_4\in\M1$. Then, since
\[
\M{1}-z=\{c\in\Co(\im1):\{P_{c+z}^{n}(0)\}_{n\in\N}\text{ is bounded}\}\,\forall z\in\Co(\im1),
\]
we have $d\pm c_4\in\M1$ if and only if $d\in(\M{1}-c_{4})\cap(\M{1}+c_{4})$. Therefore,
\begin{align*}
\T(1,\im1,\jh1) &= \{c\in\Tbb(1,\im1,\jh1):\suite\text{ is bounded}\} \\
&= \{c_{1} + c_{2}\im1 + c_{4}\jh1:c_{1} + c_{2}\im1\in(\M{1}-c_{4})\cap(\M{1}+c_{4})\}\\
&=\underset{y\in\R}{\bigcup}\{[(\M{1}-y)\cap(\M{1}+y)] + y\jh1\}.
\end{align*}
It is possible to be more precise with the last expression by using the well known property $\M{1}\cap\R = \left[-2,\frac{1}{4}\right]$ (see \cite{Carleson, RochonRansfordParise}). Indeed, we infer from it that $(\M1-y)\cap(\M1+y)=\emptyset$ whenever $y\in{\left[-\frac{9}{8},\frac{9}{8}\right]}^{c}$. Thus,
\[
\T(1,\im1,\jh1) = \underset{y\in \left[-\frac{9}{8},\frac{9}{8}\right]}{\bigcup}\{[(\M1-y)\cap(\M1+y)] + y\jh1\},
\]
and since $\M{1}\cap\R$ is an interval, $(\M1-y)\cap(\M1+y)\neq\emptyset,\forall y\in\left[-\frac{9}{8},\frac{9}{8}\right]$.
\end{proof}
The above results show that both the Tetrabrot and the Arrowheadbrot can be described as the union of intersections of two translated classical Mandelbrot sets (along the imaginary axis for the former, and along the real axis for the latter). In other words, these tridimensional fractals can be obtained by using multiple copies of a two-dimensional related fractal. It is rather interesting to note that this is the case for each of the eight principal 3D slices of $\M3$, although the required 2D subsets vary. In fact, for a given principal 3D slice, the properties of a related subspace called its “iterates space” determine the type and number of 2D subsets involved in its geometrical characterizations.\footnote{The \textit{iterates space} of a principal slice was first introduced in \cite{Brouillette,BrouilletteRochon} for other purposes.} A detailed analysis on the subject is available in \cite{Vallieres}.

\begin{proposition}\label{Metabrot}
The principal slice $\T(\im1,\im2,\im3)$, called the Metabrot (Fig.\ref{fig:metabrot}), can be characterized as follows:
\[
\T(\im1,\im2,\im3) = \underset{y\in[-m,m]}{\bigcup}\{[(A-y\im2)\cap(A+y\im2)] + y\im3\}
\]
where 
\[
A:=\{a\in\vspan\{\im1,\im2\}:\{P_{a}^{(n)}(0)\}_{n\in\N}\text{ is bounded }\forall n\in\N\}
\]
and
\[
m:=\sup\{q\in\mR:\exists p\in\mR\text{ such that }p+q\im1\in\M1\}.
\]
\end{proposition}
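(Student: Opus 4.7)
The plan is to follow the template of the proof of Proposition \ref{Arrowhead}, but with the \tr-representation \eqref{eq:idemp tr} playing the role that the \un-representation played there. Given $c = c_2\im1+c_3\im2+c_5\im3 \in \T(\im1,\im2,\im3)$, I would write $c = \eta_1+\eta_2\im3$ with $\eta_1 = c_2\im1+c_3\im2\in\BC$ and $\eta_2 = c_5\in\R\subset\BC$, so that identity \eqref{eq:idemp tr} yields
\[
c = \bigl(c_2\im1+(c_3-c_5)\im2\bigr)\tr + \bigl(c_2\im1+(c_3+c_5)\im2\bigr)\tc,
\]
with both idempotent components conveniently lying in $\vspan\{\im1,\im2\}$.

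Exploiting the term-by-term behaviour of multiplication in the \tr-representation, I would then argue that $\suite$ is bounded if and only if both idempotent components of $c$ produce bounded orbits in $\BC$, which by definition is equivalent to both belonging to $A$. Setting $b = c_2\im1+c_3\im2$ and $y = c_5$, this condition translates to $b\pm y\im2\in A$, i.e.\ $b\in (A-y\im2)\cap(A+y\im2)$, hence
\[
\T(\im1,\im2,\im3) = \bigcup_{y\in\R}\bigl\{[(A-y\im2)\cap(A+y\im2)]+y\im3\bigr\}.
\]

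It then remains to justify restricting $y$ to $[-m,m]$, for which I would prove that the above intersection is empty whenever $|y|>m$. Taking $b = p\im1+q\im2 \in (A-y\im2)\cap(A+y\im2)$ and applying the bicomplex idempotent representation \eqref{eq:idemp BC} to $b\pm y\im2 = p\im1+(q\pm y)\im2 \in A$, one sees in particular that both $(p-q+y)\im1$ and $(p-q-y)\im1$ must lie in $\M1$. The symmetry $\M1 = \overline{\M1}$ combined with the definition of $m$ then forces $|(p-q)\pm y|\leq m$, and the triangle inequality gives
\[
2|y| = \bigl|[(p-q)+y]-[(p-q)-y]\bigr| \leq |(p-q)+y|+|(p-q)-y|\leq 2m,
\]
which closes the argument.

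The main subtlety, and what makes the Metabrot argument slightly more involved than that of the Arrowheadbrot, is the handling of $A$ itself: since $\vspan\{\im1,\im2\}$ is not a subalgebra of $\BC$, membership in $A$ is not a primitive condition and must be unpacked via a second use of idempotent representations (this time the \un-representation inside $\BC$). This nested use of the idempotent machinery is precisely what converts the range restriction into a statement about $\M1$ and the parameter $m$; otherwise, the computation proceeds in close parallel with the proof of Proposition \ref{Arrowhead}.
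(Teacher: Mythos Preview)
Your proof is correct and follows essentially the same route as the paper's: the \tr-representation reduces membership in $\T(\im1,\im2,\im3)$ to the condition $d\pm c_5\im2\in A$, and then the \un-representation inside $\BC$ together with the symmetry of $\M1$ handles the range restriction on $y$. The only cosmetic difference is that the paper first establishes the inclusion $A\subset\{c_2\im1+c_3\im2:|c_3|\leq m\}$ and deduces emptiness of the intersection from it, whereas you argue the contrapositive directly via the triangle inequality; the underlying computation is the same.
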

\begin{proof}
Let $c\in\T(\im1,\im2,\im3)\subset\Tbb(\im1,\im2,\im3)$. Using identity \eqref{eq:idemp tr} yields
\begin{align*}
c &= (c_{2}\im1 + c_{3}\im2) + (c_{5})\im3 \\
&= ((c_{2}\im1 + c_{3}\im2) - c_{5}\im2)\tr + ((c_{2}\im1 + c_{3}\im2) + c_{5}\im2)\tc\\
&= (d - c_{5}\im2)\tr + (d + c_{5}\im2)\tc,
\end{align*}
where $d=c_{2}\im1 + c_{3}\im2$. It is not too difficult to see that the sequence $\suite$ is bounded if and only if the sequences $\{P_{d-c_{5}\im2}^{(n)}(0)\}_{n\in\N}$ and $\{P_{d+c_{5}\im2}^{(n)}(0)\}_{n\in\N}$ are bounded. Then, consider the set
\begin{align}\label{eq: ensemble A}
A:\!&=\{a\in\vspan\{\im1,\im2\}:\{P_{a}^{(n)}(0)\}_{n\in\N}\text{ is bounded }\forall n\in\N\} \notag\\
&= \M{2}\cap\Tbb(\im1,\im2).
\end{align}
It follows that $\{P_{d-c_{5}\im2}^{(n)}(0)\}_{n\in\N}$ and $\{P_{d+c_{5}\im2}^{(n)}(0)\}_{n\in\N}$ are bounded if and only if $d\mp c_{5}\im2\in A$, and that is the case if and only if $d\in(A-c_{5}\im2)\cap(A+c_{5}\im2)$. Consequently,
\begin{align*}
\T(\im1,\im2,\im3) &= \{c_{2}\im1 + c_{3}\im2 + c_5\im3:c_{2}\im1 + c_{3}\im2\in(A-c_{5}\im2)\cap(A+c_{5}\im2)\}\\
&= \underset{y\in\R}{\bigcup}\{[(A-y\im2)\cap(A+y\im2)] + y\im3\}.
\end{align*}
In order to be more precise with the last expression, we need to remark that $A\subset\{c_{2}\im1 + c_{3}\im2:|c_3|\leq m\}$. Indeed, let $a=a_2\im1+a_3\im2\in A$. Then, identity \eqref{eq:idemp BC} implies we can write $a=(a_2-a_3)\im1\un + (a_2+a_3)\im1\uc$. In addition, the Mandelbrot set $\M1$ is symmetric along the real axis, meaning that $\forall x\in\R, x\im1\in\M{1} \Leftrightarrow -x\im1\in\M{1}$. It follows that
\begin{align}\label{eq: carac A}
a\in A &\Leftrightarrow \{P_{a}^{n}(0)\}_{n\in\N}\text{ is bounded} \notag\\
&\Leftrightarrow (a_2 \pm a_3)\im1 \in \M{1} \notag\\
&\Leftrightarrow (\pm a_2 \pm a_3)\im1 \in \M{1},
\end{align}
hence the aforementioned property. We conclude that $(A+y\im2)\cap(A-y\im2)=\emptyset$ whenever $y\in[-m,m]^c$, whence
\[
\T(\im1,\im2,\im3) = \underset{y\in[-m,m]}{\bigcup}\{[(A-y\im2)\cap(A+y\im2)] + y\im3\}. \qedhere
\]
\end{proof}
\begin{figure}[h]
\centering
\includegraphics[width=7cm]{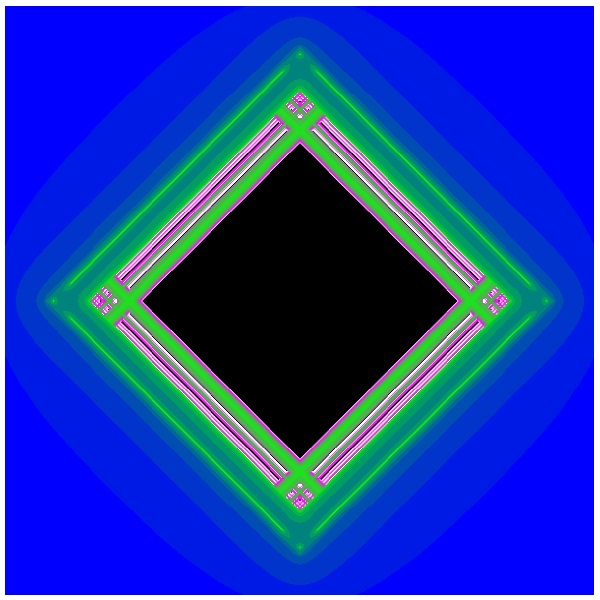}
\caption{The set $A\subset\vspan\{\im1,\im2\}$.}
\label{fig:Ensemble A}
\end{figure}
%
%
%
%
\autoref{fig:Ensemble A} illustrates that the set $A$ looks like a filled-in square with a fractal perimeter. Although this might seem intriguing, equivalence \eqref{eq: carac A} provides a simple explanation for this phenomenon. Indeed, since $a_2,a_3\in\R$, the visual appearance of the set $A$ is entirely determined by the peculiar dynamics of the classical Mandelbrot set $\M1$ along the imaginary axis.

Furthermore, as equation \eqref{eq: ensemble A} points out, we can view the set $A$ as a projection of the bicomplex Mandelbrot set $\M2$ onto the vector subspace $\Tbb(\im1,\im2)$, just like the classical Mandelbrot set can be viewed as a projection of $\M2$ onto the complex plane $\Tbb(1,\im1)$. 
It is then natural to ask whether the projection of $\M2$ onto the subspace $\Tbb(1,\jh1)$, which is the vector space of hyperbolic numbers, plays a significant role within tricomplex dynamics.

Let us start by recalling the relevant definition. The set of hyperbolic numbers (see \cite{Sobczyk, vajiac2}) is defined as
\[
\D:=\{x+y\jh1:x,y\in\R\text{ and }\jh1^2=1\}.
\]
From this, it is evident that $\D \subset \BC$. In 1990, Senn \cite{Senn} was the first to apply the Mandelbrot algorithm to hyperbolic numbers in order to generate another 2D set that revealed itself as a simple filled-in square. This property was later proved by Metzler \cite{Metzler}. The hyperbolic Mandelbrot set (or Hyperbrot) is defined as follows:
\[
\Hy := \{c\in\D : \suite\text{ is bounded}\}
\]
while Metzler obtained the following characterization:
\[
\Hy = \left\{(a,b)\in{\R}^{2}:\left|a+\frac{7}{8}\right|+|b|\leq\frac{9}{8}\right\}.
\]
In terms of tricomplex dynamics, the Hyperbrot is the key to generate Platonic solids in specific three-dimensional subspaces of \TC. Indeed, it can be shown that two principal 3D slices of $\M3$ can be characterized as such, and that is the content of the next two propositions.
\begin{proposition}[See \cite{GarantRochon}]\label{Airbrot}
The Airbrot admits the following representation:
\[
\T(1,\jh1,\jh2)= \underset{y\in [-\frac{9}{8},\frac{9}{8}]}{\bigcup}\{[(\Hy-y\jh1)\cap(\Hy+y\jh1)] + y\jh2\}.
\]
\end{proposition}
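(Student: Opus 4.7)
The plan is to mimic the strategy used for the Arrowheadbrot in \autoref{Arrowhead} and for the Metabrot in \autoref{Metabrot}, but in the \emph{hyperbolic} setting: I will apply the $\tr$-representation to peel the $\im3$-component off, recognize that both idempotent pieces are hyperbolic numbers, convert boundedness of the tricomplex orbit into membership in two translated copies of the Hyperbrot $\Hy$, and finally use Metzler's characterization of $\Hy$ to restrict the union index to $[-9/8, 9/8]$.

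First, I would take an arbitrary $c\in\T(1,\jh1,\jh2)$, so that $c=c_{1}+c_{4}\jh1+c_{6}\jh2$, and rewrite it by exploiting $\jh2=\im1\im3$ as
\[
c=(c_{1}+c_{4}\jh1)+(c_{6}\im1)\im3.
\]
Plugging $\eta_{1}=c_{1}+c_{4}\jh1$ and $\eta_{2}=c_{6}\im1$ into the $\tr$-representation \eqref{eq:idemp tr}, and using $\im1\im2=\jh1$, yields
\[
c=(d-c_{6}\jh1)\tr+(d+c_{6}\jh1)\tc,\qquad d:=c_{1}+c_{4}\jh1\in\D.
\]
Since the algebraic operations in $\TC$ can be performed term by term in the $\tr$-representation (this is precisely the property emphasized in \autoref{subsec:reps}), the orbit $\suite$ is bounded in $\TC$ if and only if both orbits $\{P_{d\mp c_{6}\jh1}^{n}(0)\}_{n\in\N}$ are bounded.

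Next I would observe that $d\pm c_{6}\jh1\in\D$, and that $\D$ is stable under $P_{c}$ whenever $c\in\D$, so boundedness of these orbits is equivalent to $d\mp c_{6}\jh1\in\Hy$. This in turn rephrases as
\[
d\in(\Hy-c_{6}\jh1)\cap(\Hy+c_{6}\jh1),
\]
leading, exactly as in the analogous arguments for the Arrowheadbrot and Metabrot, to
\[
\T(1,\jh1,\jh2)=\bigcup_{y\in\R}\{[(\Hy-y\jh1)\cap(\Hy+y\jh1)]+y\jh2\}.
\]

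The final step is to tighten the range of $y$. Using Metzler's characterization $\Hy=\{(a,b)\in\R^{2}:|a+7/8|+|b|\leq 9/8\}$, the $\jh1$-extent of $\Hy$ is exactly $[-9/8,9/8]$, so translating $\Hy$ by $\pm y\jh1$ gives sets whose $\jh1$-components lie in $[-9/8\pm y,\,9/8\pm y]$. These two intervals fail to overlap precisely when $|y|>9/8$, which makes the intersection empty in that regime; conversely, at $a=-7/8$ the $\jh1$-coordinate range has full width $9/4$, guaranteeing non-empty intersection for every $|y|\leq 9/8$. This yields the claimed representation. I do not anticipate any serious obstacle: the only delicate point is ensuring that $\D$ is invariant under $P_{c}$ for $c\in\D$ so that boundedness in $\BC$ reduces to membership in $\Hy$ rather than $\M{2}$, and this follows immediately from the closure of $\D$ under addition and multiplication.
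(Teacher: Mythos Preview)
Your argument is correct and follows exactly the template the paper uses for the analogous slices (Propositions~\ref{Arrowhead}, \ref{Metabrot}, \ref{Firebrot characterization}): apply the $\tr$-representation to split $c=c_{1}+c_{4}\jh1+c_{6}\jh2$ into two hyperbolic components $d\mp c_{6}\jh1$, reduce boundedness to membership in $\Hy$, and then trim the index set using Metzler's description of $\Hy$. The paper itself does not reprove this proposition here but defers to \cite{GarantRochon}, where the same idempotent-decomposition method is employed; your write-up matches that approach.
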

As stated in \cite{GarantRochon}, proposition \ref{Airbrot} and Metzler's characterization establish that the Airbrot is a regular octahedron of edge length equal to $\frac{9}{8}\sqrt{2}$. We wish to prove a similar result for the Firebrot, whose geometrical shape also seems regular.
\begin{proposition}\label{Firebrot characterization}
The principal slice $\T(\jh1,\jh2,\jh3)$, named the Firebrot (Fig.\ref{fig:firebrot}), can be characterized as follows:
\[
\T(\jh1,\jh2,\jh3) = \underset{y \in [-\frac{1}{4},\frac{1}{4}]}{\bigcup}\{[(\Hp+y\jh1)\cap(-\Hp-y\jh1)] + y\jh2\}
\]
where $\Hp := \{c_{7}\jh3+c_{4}\jh1:c_{7}+c_{4}\jh1 \in \Hy\}$.
\end{proposition}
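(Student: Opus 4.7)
The plan is to follow the template of Propositions~\ref{Arrowhead} and~\ref{Metabrot}: apply an idempotent decomposition of $\TC$, reduce the boundedness of the tricomplex orbit to the conjunction of two hyperbolic boundedness conditions, and then translate those back into geometric conditions inside $\vspan\{\jh1,\jh3\}$.

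First I would take an arbitrary $c = c_4\jh1 + c_6\jh2 + c_7\jh3 \in \Tbb(\jh1,\jh2,\jh3)$ and write it as $\eta_1 + \eta_2\im3$ with $\eta_1 = c_4\jh1 \in \BC$ and $\eta_2 = c_6\im1 + c_7\im2 \in \BC$. Applying identity~\eqref{eq:idemp tr} and simplifying yields
\[
c = \alpha\,\tr + \beta\,\tc, \qquad \alpha := c_7 + (c_4-c_6)\jh1, \quad \beta := -c_7 + (c_4+c_6)\jh1,
\]
with $\alpha,\beta \in \D$. Because algebraic operations act componentwise in this representation, $\suite$ is bounded if and only if both $\{P_{\alpha}^{n}(0)\}_{n\in\N}$ and $\{P_{\beta}^{n}(0)\}_{n\in\N}$ are bounded, which is equivalent to $\alpha,\beta \in \Hy$.

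Next I would convert each of these two membership conditions into a geometric statement about $c_4\jh1 + c_7\jh3$. The first is direct from the definition of $\Hp$: $\alpha \in \Hy$ is equivalent to $c_7\jh3 + (c_4-c_6)\jh1 \in \Hp$, i.e.\ $c_4\jh1 + c_7\jh3 \in \Hp + c_6\jh1$. For the second, I would invoke Metzler's characterization to extract the symmetry $(a,b)\in\Hy \Leftrightarrow (a,-b)\in\Hy$, which lets me rewrite $\beta \in \Hy$ as $-c_7 - (c_4+c_6)\jh1 \in \Hy$; this is exactly $c_4\jh1 + c_7\jh3 \in -\Hp - c_6\jh1$. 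Combining the two conditions gives
\[
\T(\jh1,\jh2,\jh3) = \underset{y\in\R}{\bigcup}\{[(\Hp+y\jh1)\cap(-\Hp-y\jh1)] + y\jh2\}.
\]

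Finally I would sharpen the union to $y\in[-\tfrac14,\tfrac14]$. A point $a\jh3+b\jh1$ lies in $(\Hp+y\jh1)\cap(-\Hp-y\jh1)$ exactly when Metzler's inequalities $|a+\tfrac78|+|b-y|\leq \tfrac98$ and $|a-\tfrac78|+|b+y|\leq \tfrac98$ both hold; summing them and using $|a+\tfrac78|+|a-\tfrac78|\geq \tfrac74$ together with $|b-y|+|b+y|\geq 2|y|$ forces $|y|\leq \tfrac14$, and feasibility at the boundary (e.g.\ $a=0$) makes the bound sharp. The main technical subtlety I anticipate is the sign bookkeeping in the $\beta$ condition: since $\Hy$ is \emph{not} centrally symmetric (its centre sits at $-\tfrac78$), one cannot simply negate both coefficients to pass between $\Hp$ and $-\Hp$, and the identification relies essentially on the $(a,b)\mapsto(a,-b)$ symmetry of $\Hy$. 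Once this alignment is secured, the idempotent decomposition and the triangle-inequality estimate are both mechanical.
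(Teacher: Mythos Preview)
Your proposal is correct and follows essentially the same approach as the paper: both apply the $\tr$-representation \eqref{eq:idemp tr} to obtain the same pair of hyperbolic components, reduce to $\alpha,\beta\in\Hy$, and then translate into the stated union over $y$. You are in fact more explicit than the paper at two points it glosses over---the $(a,b)\mapsto(a,-b)$ symmetry of $\Hy$ needed to identify the $\beta$-condition with $-\Hp-c_6\jh1$, and the Metzler/triangle-inequality argument pinning down $|y|\le\tfrac14$---so the write-up should go through smoothly.
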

\begin{proof}
For any $c\in\T(\jh1,\jh2,\jh3)\subset\Tbb(\jh1,\jh2,\jh3)$, we have
\begin{align*}
c &= c_{4}\jh1 + c_{6}\jh2 + c_{7}\jh3\\
&= (d-c_{6}\jh1)\tr + (-\overline{d}+c_{6}\jh1)\tc,
\end{align*}
where $d=c_{7} + c_{4}\jh1\in\D$ and $-\overline{d}=-c_{7} + c_{4}\jh1$.\footnote{The hyperbolic conjugate of any $z=x+y\jh1\in\D$ is defined as $\overline{z}=x-y\jh1$.} Again, since $\suite$ is bounded if and only if $\{P_{d-c_{6}\jh1}^{(n)}(0)\}_{n\in\N}$ and $\{P_{-\overline{d}+c_{6}\jh1}^{(n)}(0)\}_{n\in\N}$ are bounded, we have $c\in\T(\jh1,\jh2,\jh3)$ if and only if $d\in(\Hy+c_{6}\jh1)\cap(-\Hy-c_{6}\jh1)$. Therefore,
\[
\T(\jh1,\jh2,\jh3) = \{c_{4}\jh1 + c_{6}\jh2 + c_{7}\jh3:c_{7}+c_{4}\jh1\in(\Hy+c_{6}\jh1)\cap(-\Hy-c_{6}\jh1)\}.
\]
Notice that $\Hy\not\subset\Tbb(\jh1,\jh2,\jh3)\!\supset\!\T(\jh1,\jh2,\jh3)$. In order to establish a characterization analogous to that of the previous 3D slices, we must make sure that the 2D sets involved are in the right subspace. This can be achieved by setting
\[
\Hp := \{c_{7}\jh3+c_{4}\jh1:c_{7}+c_{4}\jh1 \in \Hy\},
\]
where $\Hp$ is a duplicate of the Hyperbrot in the subspace $\Tbb(\jh1,\jh3)$. This way, we may write
\begin{align*}
\T(\jh1,\jh2,\jh3) &= \{c_{4}\jh1 + c_{6}\jh2 + c_{7}\jh3:c_{7}\jh3+c_{4}\jh1\in(\Hp+c_{6}\jh1)\cap(-\Hp-c_{6}\jh1)\}\\
&= \underset{y\in\R}{\bigcup}\{[(\Hp+y\jh1)\cap(-\Hp-y\jh1)] + y\jh2\}.
\end{align*}
To complete the proof, remark that $(\Hp+y\jh1)\cap(-\Hp-y\jh1)=\emptyset,\,\forall y\in{\left[-\frac{1}{4},\frac{1}{4}\right]}^{c}$. Therefore,
\[
\T(\jh1,\jh2,\jh3) = \underset{y \in \left[-\frac{1}{4},\frac{1}{4}\right]}{\bigcup}\{[(\Hp+y\jh1)\cap(-\Hp-y\jh1)] + y\jh2\}.
\]
Finally, we could verify that $\forall y\in\left[-\frac{1}{4},\frac{1}{4}\right]$, the sets $(\Hp+y\jh1)\cap(-\Hp-y\jh1)$ are non-empty rectangles, hence the result.
\end{proof}
Propositions \ref{Airbrot} and \ref{Firebrot characterization} are interesting and similar in that they state how both the Airbrot and the Firebrot are Platonic solids that can be generated by the union of intersections of translated squares (Hyperbrots). The former can be generated using vertically translated Hyperbrots (that is, a translation along the hyperbolic axis), which ensures each intersection is square shaped. For the latter, a reflection along the hyperbolic axis is applied to one of the Hyperbrots prior to the vertical translation, meaning that every non-empty intersection will be rectangular. This particularity is what accounts for the Firebrot's tetrahedral shape.
\begin{theorem}\label{thm:tetraedre}
$\T(\jh1,\jh2,\jh3)$ is a regular tetrahedron with edge length of $\frac{\sqrt{2}}{2}$.
\end{theorem}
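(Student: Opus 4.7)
My plan is to combine Proposition \ref{Firebrot characterization} with Metzler's explicit description of $\Hy$ in order to compute the horizontal cross-sections of the Firebrot (i.e., its intersections with the planes $c_6=y$ for $y\in[-1/4,1/4]$), and then recognize these cross-sections as those of a specific regular tetrahedron.

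The first step is to make the defining inequalities of the sets $\Hp+y\jh1$ and $-\Hp-y\jh1$ completely explicit. Metzler's formula gives
\[
\Hp = \{c_7\jh3+c_4\jh1 : |c_7+\tfrac{7}{8}|+|c_4|\leq\tfrac{9}{8}\},
\]
so $\Hp+y\jh1$ is the $L^1$-ball (diamond) centered at $(c_7,c_4)=(-7/8,y)$ of radius $9/8$, while $-\Hp-y\jh1$ is the diamond centered at $(7/8,-y)$ of the same radius. These are the two 2D shapes whose intersection I must compute at each level $y$.

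The cleanest way to compute the intersection is to rotate coordinates by $45^\circ$, setting $u=c_7+c_4$ and $v=c_7-c_4$, because in these coordinates an $L^1$-ball becomes an axis-aligned square (using the identity $|\alpha|+|\beta|=\max(|\alpha+\beta|,|\alpha-\beta|)$). The two diamonds thus become squares, and their intersection is straightforward: after a short calculation the intersection in $(u,v)$-coordinates is the rectangle
\[
[-\tfrac{1}{4}-y,\tfrac{1}{4}+y]\times[-\tfrac{1}{4}+y,\tfrac{1}{4}-y],
\]
which is non-empty iff $y\in[-1/4,1/4]$, matching the range in Proposition \ref{Firebrot characterization}. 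Converting its four corners back to $(c_7,c_4)$-coordinates, I obtain the vertices $(-1/4,-y)$, $(y,1/4)$, $(1/4,y)$, $(-y,-1/4)$ of the cross-section rectangle at height $c_6=y$.

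Finally, I would observe that at $y=1/4$ the rectangle degenerates to the segment joining $(-1/4,-1/4,1/4)$ to $(1/4,1/4,1/4)$, while at $y=-1/4$ it degenerates to the segment joining $(-1/4,1/4,-1/4)$ to $(1/4,-1/4,-1/4)$, and these two degenerate segments are perpendicular, lie in parallel planes, and have the same length. This is exactly the characteristic configuration of two opposite edges of a regular tetrahedron. I would then verify directly that the four points
\[
A=(-\tfrac14,-\tfrac14,\tfrac14),\quad B=(\tfrac14,\tfrac14,\tfrac14),\quad C=(-\tfrac14,\tfrac14,-\tfrac14),\quad D=(\tfrac14,-\tfrac14,-\tfrac14)
\]
are pairwise equidistant at distance $\sqrt{2}/2$, and that the horizontal cross-section at height $y$ of the convex hull $\operatorname{conv}\{A,B,C,D\}$ is precisely the rectangle with the four corners computed above (by parametrizing each of the six edges and evaluating at $c_6=y$). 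This identifies $\T(\jh1,\jh2,\jh3)$ with the regular tetrahedron $\operatorname{conv}\{A,B,C,D\}$ of edge length $\sqrt{2}/2$.

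The main obstacle is the rectangle-intersection computation: once carried out in the rotated $(u,v)$-coordinates it is painless, but attempted directly through nested absolute-value inequalities it produces a tedious case analysis. Aside from that, everything reduces to elementary coordinate geometry, and the matching between cross-sections of the Firebrot and cross-sections of the explicit tetrahedron closes the argument.
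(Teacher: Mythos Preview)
Your argument is correct, and it follows a genuinely different route from the paper's proof. The paper does \emph{not} build on Proposition~\ref{Firebrot characterization}; it explicitly says that although that proposition gives geometric insight, a more direct method is used. Concretely, the paper expresses $c=c_4\jh1+c_6\jh2+c_7\jh3$ via the four-term idempotent representation \eqref{eq:idemp 4}, obtaining four real components $a_1,\dots,a_4$ that are linear in $c_4,c_6,c_7$. Membership in $\T(\jh1,\jh2,\jh3)$ is then equivalent to $a_i\in[-2,\tfrac14]$ for each $i$, an explicit system of eight linear inequalities. Fourier--Motzkin elimination shows $|c_6|\le\tfrac14$, which makes four of the eight inequalities redundant; the remaining four half-spaces cut out a simplex whose four vertices are read off directly and checked to be equidistant at distance $\sqrt2/2$.

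Your approach instead leverages Proposition~\ref{Firebrot characterization} together with Metzler's description of $\Hy$, and the $45^\circ$ rotation $(u,v)=(c_7+c_4,c_7-c_4)$ that turns the two $L^1$-diamonds into axis-aligned squares is a nice device that makes the intersection transparent. What you gain is a vivid geometric picture---literally slicing the solid into rectangles and watching them degenerate to the pair of opposite edges---whereas the paper's method is purely linear-algebraic and arrives at the four defining half-spaces in one stroke without any case analysis or coordinate rotation. Both arguments end at the same four vertices (your $A,B,C,D$ coincide, up to ordering of coordinates, with the extreme points of the paper's reduced system), so the comparison is really stylistic: yours is more synthetic and ties the result back to the Hyperbrot, while the paper's is more self-contained and replaces your cross-section matching by a standard polyhedral reduction.
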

Although proposition \ref{Firebrot characterization} provides relevant insight into the Firebrot's geometrical shape, the assertion above is proven using a more direct approach.
\begin{proof}
Suppose that $c\in\T(\jh1,\jh2,\jh3)$ with $c=c_{4}\jh1+c_{6}\jh2+c_{7}\jh3$. Using identity \eqref{eq:idemp 4} yields
\begingroup
\addtolength{\jot}{0.5em}
\begin{align*}
c &= (0c_1 + 0c_2\im1) + (0c_3 + c_4\im1)\im2 + [(0c_5+c_6\im1) + (c_7 + 0c_8\im1)\im2]\im3 \\
&= \begin{multlined}[t]
(c_{7} - (c_{4}\im1-c_{6}\im1)\im1)\ut + (c_{7} + (c_{4}\im1-c_{6}\im1)\im1)\uct \\
+ (-c_{7} - (c_{4}\im1+c_{6}\im1)\im1)\utc +(-c_{7} + (c_{4}\im1+c_{6}\im1)\im1)\uctc
\end{multlined} \\
&= \begin{multlined}[t]
(c_{4}-c_{6}+c_{7})\ut + (-c_{4} + c_{6}+c_{7})\uct \\
+ (c_{4} + c_{6}-c_{7})\utc +(-c_{4} - c_{6}-c_{7})\uctc
\end{multlined} \\
&= (a_1)\ut + (a_2)\uct + (a_3)\utc +(a_4)\uctc,
\end{align*}
\endgroup
where we denote every component of the last equality by $a_i$ for convenience. It follows that $\suite$ is bounded if and only if $\{P_{a_{i}}^{(n)}(0)\}_{n\in\mathbb{N}}$ is bounded, $i=1,\dotsc,4$. 
Since $a_{i}\in\R$ $\forall i\in\{1,\dotsc,4\}$ and $\M{1}\cap\R=\left[-2,\frac{1}{4}\right]$, we deduce that
\begin{align*}
a_{1},a_{2},a_{3},a_{4}\in\M{1} \, 
&\Leftrightarrow\, \left\{
					\begin{array}{l}
						c_{4} - c_{6} + c_{7} \leq \frac{1}{4} \\ [1pt]
						-c_{4}+ c_{6} - c_{7} \leq 2 \\[1pt]
						-c_{4} + c_{6}+ c_{7} \leq \frac{1}{4} \\[1pt]
						c_{4} - c_{6} - c_{7} \leq 2\\[1pt]
						c_{4} + c_{6}-c_{7} \leq \frac{1}{4} \\[1pt]
						-c_{4} - c_{6} + c_{7} \leq 2 \\[1pt]
						-c_{4} - c_{6}-c_{7} \leq \frac{1}{4} \\[1pt]
						c_{4} + c_{6} + c_{7} \leq 2.
					\end{array}%
				\right.%
\end{align*}%
By using Fourier-Motzkin elimination (see \cite{Vallieres}, annex A), it is possible to show that $c\in\T(\jh1,\jh2,\jh3)\Rightarrow|c_6|\leq\frac{1}{4}$. In turn, this means that the second, fourth, sixth and eighth inequalities of the last system are unnecessary. In other words, they are redundant constraints and they can be removed without changing the set of solutions. Doing so reduces the number of inequalities to four:
\begin{equation*}
\begin{array}{l}
c_{4}-c_{6}+c_{7} \leq \frac{1}{4} \\ 
-c_{4} + c_{6}+c_{7} \leq \frac{1}{4} \\ 
c_{4} + c_{6}-c_{7} \leq \frac{1}{4} \\ 
-c_{4} - c_{6}-c_{7} \leq \frac{1}{4}
\end{array}
\end{equation*}
The rest of the proof is carried out by determining the $(\jh1,\jh2,\jh3)$ coordinates of the four extreme points (vertices) of this system, and then calculating the distance between these.
\end{proof}

We now turn our attention to the remaining principal slices, which are called the Mousebrot (Fig.\ref{fig:mousebrot}), the Turtlebrot (Fig.\ref{fig:turtlebrot}) and the Hourglassbrot (Fig.\ref{fig:hourglassbrot}). Although the three of them admit geometrical characterizations similar to those presented at the beginning of the current section, the emphasis is placed on their interrelationships.
\begin{proposition}\label{prop:turtlebrot}
Consider the principal 3D slice $\T(\im1,\im2,\jh2)$. We have
\begin{gather*}
\T(\im1,\im2,\jh2) = \Ta(\im1,\im2,-\jh1)\mcap\Ta(\im1,\im2,\jh1)
\shortintertext{where}
\Ta(\im1,\im2,\jh1) := \{c_2\im1+c_3\im2+c_6\jh2:c_2\im1+c_3\im2+c_6\jh1 \in \T(\im1,\im2,\jh1)\}.
\end{gather*}
\end{proposition}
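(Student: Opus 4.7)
The plan is to follow exactly the same template used in the proofs of \autoref{Arrowhead} and \autoref{Metabrot}: select the idempotent representation best adapted to the geometry of $\Tbb(\im1,\im2,\jh2)$, use the fact that $P_c$ iterates componentwise in such a representation, and then reinterpret each boundedness condition as membership in a set already characterized earlier in the paper.

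First, I would take an arbitrary $c = c_2\im1 + c_3\im2 + c_6\jh2 \in \Tbb(\im1,\im2,\jh2)$ and write $c = \eta_1 + \eta_2\im3$ with $\eta_1 = c_2\im1 + c_3\im2 \in \BC$ and $\eta_2 = c_6\im1 \in \BC$ (possible because $\jh2 = \im1\im3$). Applying the \tr-representation \eqref{eq:idemp tr} and using $\im1\im2 = \jh1$, one gets
\[
c = (\eta_1 - \eta_2\im2)\tr + (\eta_1 + \eta_2\im2)\tc = (c_2\im1 + c_3\im2 - c_6\jh1)\,\tr + (c_2\im1 + c_3\im2 + c_6\jh1)\,\tc.
\]
Since addition and multiplication are performed term by term in this representation, $\{P_c^{n}(0)\}_{n\in\N}$ is bounded if and only if both of the bicomplex sequences $\{P_{c_2\im1+c_3\im2+c_6\jh1}^{n}(0)\}_{n\in\N}$ and $\{P_{c_2\im1+c_3\im2-c_6\jh1}^{n}(0)\}_{n\in\N}$ are bounded.

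Next, I would observe that each of the two parameters above lies in $\Tbb(\im1,\im2,\jh1)$, so that the boundedness of its orbit is equivalent to $c_2\im1 + c_3\im2 \pm c_6\jh1 \in \M3 \cap \Tbb(\im1,\im2,\jh1) = \T(\im1,\im2,\jh1)$. Reading this through the definition of $\Ta$, the ``$+$'' condition becomes $c \in \Ta(\im1,\im2,\jh1)$, while the ``$-$'' condition becomes $c \in \Ta(\im1,\im2,-\jh1)$, and the conjunction of the two is precisely the intersection appearing in the statement. Running this equivalence in both directions yields the claimed set equality.

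The main obstacle is not analytic but purely notational: one has to carefully interpret $\Ta(\im1,\im2,-\jh1)$ as the image of $\T(\im1,\im2,\jh1)$ in the ambient subspace $\Tbb(\im1,\im2,\jh2)$ under the identification $c_6(-\jh1) \leftrightarrow c_6\jh2$, so that the signs on $c_6\jh1$ inside the $\tr$-component track correctly the two copies being intersected. Once this convention is fixed, no range restriction on the parameter $c_6$ nor any additional estimate is required, since the result is phrased as a set-theoretic intersection rather than as a union of cross-sections; this is what distinguishes it from the characterizations of the Arrowheadbrot, Metabrot, Airbrot and Firebrot and makes the proof noticeably shorter.
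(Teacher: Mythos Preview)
Your proof is correct and follows essentially the same route as the paper: both apply the $\gamma_3$-representation \eqref{eq:idemp tr} to $c=c_2\im1+c_3\im2+c_6\jh2$, obtain the two bicomplex components $c_2\im1+c_3\im2\pm c_6\jh1\in\Tbb(\im1,\im2,\jh1)$, and translate the resulting pair of membership conditions into the intersection $\Ta(\im1,\im2,-\jh1)\cap\Ta(\im1,\im2,\jh1)$ via the relabelling $\pm\jh1\leftrightarrow\jh2$. The only cosmetic difference is that the paper records the sign flip on the $\jh1$-coordinate through the tricomplex conjugate, writing the $\tr$-component as $-d^{\ddagger_4}$ with $d=c_2\im1+c_3\im2+c_6\jh1$, whereas you write the two components out explicitly; the underlying argument and its length are the same.
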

\begin{proof}
Suppose that $c\in\T(\im1,\im2,\jh2)\subset\Tbb(\im1,\im2,\jh2)$. Applying identity \eqref{eq:idemp tr} to $c$ yields
\begin{align*}
c &= c_2\im1+c_3\im2+(c_6\im1)\im3 \\
&= (c_2\im1+c_3\im2-c_6\jh1)\tr + (c_2\im1+c_3\im2+c_6\jh1)\tc \\
&= (-d^{\ddagger_4})\tr + (d)\tc,
\end{align*}
where $d=c_2\im1+c_3\im2+c_6\jh1 \Rightarrow -d^{\ddagger_4}=c_2\im1+c_3\im2-c_6\jh1$.\footnote{For any $d\in\TC$, the symbol $d^{\ddagger_4}$ denotes its fourth tricomplex conjugate \cite{GarantRochon}.}  
Since $\suite$ is bounded if and only if $\{P_{-d^{\ddagger_4}}^{n}(0)\}_{n\in\N}$ and $\{P_{d}^{n}(0)\}_{n\in\N}$ are bounded, we have 
\begin{align*}
c\in\T(\im1,\im2,\jh2) &\Leftrightarrow -d^{\ddagger_4}\in \T(\im1,\im2,\jh1)\text{ and }d\in\T(\im1,\im2,\jh1) \\
&\Leftrightarrow d\in\T(\im1,\im2,-\jh1)\text{ and }d\in \T(\im1,\im2,\jh1) \\
&\Leftrightarrow d\in\T(\im1,\im2,-\jh1)\mcap\T(\im1,\im2,\jh1).
\end{align*}
Observe that $\T(\im1,\im2,\pm\jh1)\not\subset\Tbb(\im1,\im2,\jh2)$. In an analogous manner to that of proposition \ref{Firebrot characterization}, we define
\[
\Ta(\im1,\im2,\jh1) := \{c_2\im1+c_3\im2+c_6\jh2:c_2\im1+c_3\im2+c_6\jh1 \in \T(\im1,\im2,\jh1)\}
\]
to make sure the 3D sets involved are in the right subspace. Thus,
\begin{align*}
\T(\im1,\im2,\jh2) &= \{c\in\Tbb(\im1,\im2,\jh2) : \suite \text{ is bounded}\}\\
&= \begin{multlined}[t]
\{c=c_2\im1+c_3\im2+c_6\jh2:\\
c_2\im1+c_3\im2+c_6\jh1\in\T(\im1,\im2,-\jh1)\mcap\T(\im1,\im2,\jh1) \}
\end{multlined}\\
&=\{c\in\Tbb(\im1,\im2,\jh2):c\in\Ta(\im1,\im2,-\jh1)\mcap\Ta(\im1,\im2,\jh1) \}. \qedhere
\end{align*}
\end{proof}
The above result reveals that the Turtlebrot $\T(\im1,\im2,\jh2)$ can be expressed as the intersection of two other 3D slices. Moreover, in the right subspace, one of these slices is none other than a duplicate of the Mousebrot (Fig.\ref{fig:mousebrot}), while the second is obtained by applying a reflection along the plane $z\jh2=0$ to the first.\footnote{The reference \cite{Vallieres} contains another characterization of the same principal slice involving the Tetrabrot $\T(1,\im1,\im2)$ with all the details.} Interestingly, only one other principal 3D slice possesses the same property: the Hourglassbrot.
\begin{proposition}\label{prop:hourglassbrot}
Consider the principal 3D slice $\T(\im1,\jh1,\jh2)$. We have
\begin{gather*}
\T(\im1,\jh1,\jh2) = \Ta(1,\im1,\jh1)\mcap\Ta(-1,\im1,\jh1)
\shortintertext{where}
\Ta(1,\im1,\jh1) := \{c_2\im1+c_4\jh1+c_6\jh2:c_2\im1+c_4\jh1+c_6 \in \T(1,\im1,\jh1)\}.
\end{gather*}
\end{proposition}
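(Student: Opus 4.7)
The plan is to mirror Proposition \ref{prop:turtlebrot} exactly, except that I would select the $\dx$-representation \eqref{eq:idemp de} in place of the $\tr$-representation. The choice is dictated by the identity $\jh2=\im1\im3$: rewriting $c=c_2\im1+c_4\jh1+c_6\jh2$ as $c = (c_2\im1+c_4\jh1) + (c_6\im1)\im3$ leaves an $\im1$ factor inside the $\im3$-coefficient, and it is precisely $\dx=(1+\jh2)/2$ whose idempotent decomposition multiplies the $\im3$-coefficient by $\im1$, thereby turning $c_6\im1$ into a real number and producing 3D components in $\Tbb(1,\im1,\jh1)$.

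Concretely, applying \eqref{eq:idemp de} with $\eta_1 = c_2\im1+c_4\jh1$ and $\eta_2=c_6\im1$ and using $\im1^2=-1$ yields
\begin{equation*}
c = (c_2\im1+c_4\jh1+c_6)\dx + (c_2\im1+c_4\jh1-c_6)\dc.
\end{equation*}
Both idempotent components lie in $\Tbb(1,\im1,\jh1)$. Since the Mandelbrot iteration is carried out term by term in any idempotent representation, the sequence $\suite$ is bounded if and only if both $\{P_{c_2\im1+c_4\jh1+c_6}^{(n)}(0)\}_{n\in\N}$ and $\{P_{c_2\im1+c_4\jh1-c_6}^{(n)}(0)\}_{n\in\N}$ are bounded. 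Equivalently, $c\in\T(\im1,\jh1,\jh2)$ if and only if both $c_2\im1+c_4\jh1\pm c_6$ belong to $\T(1,\im1,\jh1)$.

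To repackage this equivalence as an intersection of 3D slices sitting inside $\Tbb(\im1,\jh1,\jh2)$, I would interpret the notation $\Ta(-1,\im1,\jh1)$ in the statement in direct analogy with $\T(\im1,\im2,-\jh1)$ from the Turtlebrot proof, namely as the 3D duplicate $\{c_2\im1+c_4\jh1+c_6\jh2 : c_2\im1+c_4\jh1-c_6\in\T(1,\im1,\jh1)\}$, which is obtained by reflecting $\T(1,\im1,\jh1)$ across its $\im1\jh1$-plane and then replacing the real coordinate by $\jh2$. The first boundedness condition then translates to $c\in\Ta(1,\im1,\jh1)$ and the second to $c\in\Ta(-1,\im1,\jh1)$, producing the claimed intersection.

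The step I expect to require the most care is not algebraic but notational: one must track two distinct ambient spaces. The objects $\Ta(\pm 1,\im1,\jh1)$ live in $\Tbb(\im1,\jh1,\jh2)$, while the defining conditions are tested in $\Tbb(1,\im1,\jh1)$, and the $\pm 1$ in the notation encodes the reflection of the real coordinate induced by the $\im1\mapsto-\im1$ action inherent in the $\dx$-decomposition. Once this distinction is clearly stated (and the duplicate set $\Ta(-1,\im1,\jh1)$ is introduced explicitly, exactly as $\Ta(1,\im1,\jh1)$ is in the statement), the argument closes in a single line, entirely parallel to the Turtlebrot case.
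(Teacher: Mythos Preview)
Your proposal is correct and follows exactly the route the paper indicates: the paper's own proof consists of the single sentence ``set $c=c_2\im1+c_4\jh1+c_6\jh2$ and use identity \eqref{eq:idemp de}; the rest is similar to Proposition \ref{prop:turtlebrot},'' and your write-up is precisely the expanded version of that, including the correct identification of the $\dx$-components $c_2\im1+c_4\jh1\pm c_6\in\Tbb(1,\im1,\jh1)$ and the relabeling into $\Ta(\pm1,\im1,\jh1)\subset\Tbb(\im1,\jh1,\jh2)$.
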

\begin{proof}
We begin by setting $c=c_2\im1+c_4\jh1+c_6\jh2\in\T(\im1,\jh1,\jh2)$ and then using identity \eqref{eq:idemp de}. The rest of the proof is similar to that of Proposition \ref{prop:turtlebrot}.
\end{proof}
Thus, in the right subspace, the Hourglassbrot $\T(\im1,\jh1,\jh2)$ can be expressed as the intersection of two 3D slices: a copy of the Arrowheadbrot (Fig.\ref{fig:arrowheadbrot}), and the slice obtained by applying a reflection along the plane $z\jh2=0$ to it.

\section{The cube and the stellated octahedron}\label{sec:cube}
The main reason why the principal slices $\T(1,\jh1,\jh2)$ and $\T(\jh1,\jh2,\jh3)$ are Platonic solids is that in both cases, the iterates calculated when applying the Mandelbrot algorithm to numbers of the form $c_1+c_4\jh1+c_6\jh2$ and $c_4\jh1+c_6\jh2+c_7\jh3$ stay in a particular four-dimensional subspace of $\TC$ called the \textit{biduplex numbers} \cite{Vallieres}. Indeed, it is easily verified that the set of biduplex numbers
\[
\D(2) := \{c_1+c_4\jh1+c_6\jh2+c_7\jh3:c_1,c_4,c_6,c_7\in\R\text{ and }\jh1^2=\jh2^2=\jh3^2=1\}
\]
together with tricomplex addition and multiplication forms a proper subring of $\TC$, and is thus closed under these operations. Moreover, in the right idempotent basis \eqref{eq:idemp 4}, every biduplex number can be expressed through four real components. It follows that the boundedness of the sequence $\suite$ specific to any $c\in\T(1,\jh1,\jh2)$ or $c\in\T(\jh1,\jh2,\jh3)$ is entirely dependent on the dynamics of the classical Mandelbrot set $\M1$ along the real axis, the intersection of which corresponds to a single interval, hence the regularity.

This suggests another approach to generate regular polyhedra within tricomplex dynamics: to define and visualize 3D slices in a basis that is directly linked to the simple dynamics of the real line. Since identity \eqref{eq:idemp 4} indicates that the set $\{\ut,\uct,\utc,\uctc\}$ is a basis of the vector space of $\TC$ with complex coefficients, the set $\{\ut,\uct,\utc,\uctc,\im1\ut,\im1\uct,\im1\utc,\im1\uctc\}$ is a basis of the same space, but with real coefficients. This brings us to the following definitions.
\begin{definition}
Let $\al,\b,\de$ be three distinct elements taken in $\{\ut,\uct,\utc,\allowbreak\uctc,\im1\ut,\im1\uct,\im1\utc,\im1\uctc\}$. The space
\[
\mT(\al,\b,\de) := \vspan\{\al,\b,\de\}
\]
is the vector subspace of $\TC$ consisting of all real finite linear combinations of these three distinct units.
\end{definition}
\begin{definition}
Let $\al,\b,\de$ be three distinct elements taken in $\{\ut,\uct,\utc,\allowbreak\uctc,\im1\ut,\im1\uct,\im1\utc,\im1\uctc\}$. We define an idempotent 3D slice of the tricomplex Mandelbrot set $\M3$ as
\begin{align*}
\Te(\al,\b,\de) &= \{c\in\mT(\al,\b,\de) : \suite\text{ is bounded}\} \\
&= \mT(\al,\b,\de)\cap\M{3}.
\end{align*}
\end{definition}
Although there are still fifty-six possible slices in total, it is obvious that few distinct 3D dynamics actually occur in this basis. For the sake of brevity, we will introduce the only 3D slice that is of interest regarding our objective. Figure \ref{fig:earthbrot} illustrates the idempotent 3D slice $\T_{e}(\ut,\uct,\utc)$, called the \textit{Earthbrot}.
\begin{figure}[h]
\centering
\includegraphics[width=7cm]{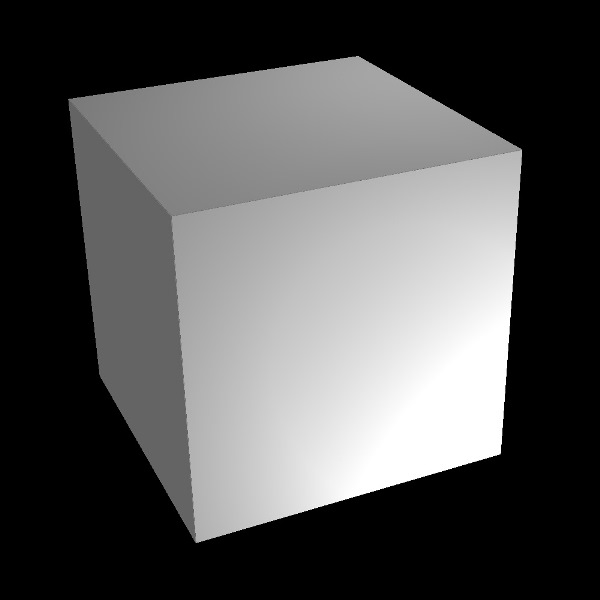}
\caption{The idempotent slice $\T_{e}(\ut,\uct,\utc)$.}
\label{fig:earthbrot}
\end{figure}
\begin{proposition}
The Earthbrot is a cube with edge length of $\frac{9}{4}$.
\end{proposition}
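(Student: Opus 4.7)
The plan is to exploit identity~\eqref{eq:idemp 4} in the most direct possible way. First I would take an arbitrary element of the subspace, $c = a_1\ut + a_2\uct + a_3\utc$ with $a_1,a_2,a_3 \in \R$, and observe that this is \emph{already} the full idempotent representation of $c$ in the basis $\{\ut,\uct,\utc,\uctc\}$, with the $\uctc$-component equal to~$0$. Because the four idempotents $\ut,\uct,\utc,\uctc$ are pairwise orthogonal (product zero) and each one is idempotent, multiplication carried out in this representation is performed component-wise, exactly as emphasized at the end of \autoref{subsec:reps}.

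Consequently, iterating $P_c$ from~$0$ decouples into four independent real iterations:
\[
P_c^n(0) = P_{a_1}^n(0)\,\ut + P_{a_2}^n(0)\,\uct + P_{a_3}^n(0)\,\utc + P_0^n(0)\,\uctc.
\]
The fourth sequence $P_0^n(0)$ is identically zero, so the boundedness of $\suite$ reduces to the boundedness of each of the three real sequences $\{P_{a_i}^n(0)\}_{n\in\N}$. Using the classical fact $\M{1}\cap\R=\left[-2,\tfrac{1}{4}\right]$, invoked already in the proofs of Propositions~\ref{Arrowhead} and \ref{Metabrot}, this yields
\[
\Te(\ut,\uct,\utc) = \bigl\{a_1\ut + a_2\uct + a_3\utc : a_1,a_2,a_3 \in \left[-2,\tfrac{1}{4}\right]\bigr\}.
\]

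Finally, to confirm that this coordinate box is genuinely a cube and to read off the edge length, I would carry out a short computation expressing $\ut,\uct,\utc$ in the real basis $\{1,\jh1,\jh2,\jh3\}$ of the biduplex subring $\D(2)$, check that the three vectors are pairwise orthogonal with equal norms in $\TC\simeq\R^8$, and conclude that the admissible set of triples $(a_1,a_2,a_3)$ is a genuine cube whose edge measured along each of the three mutually orthogonal directions $\ut,\uct,\utc$ is $\tfrac{1}{4}-(-2)=\tfrac{9}{4}$. There is no real obstacle in the argument: the only non-routine ingredient is the term-by-term iteration property, but that is already in hand from \autoref{subsec:reps}, after which everything reduces to one-dimensional real dynamics on an interval.
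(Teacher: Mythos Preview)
Your argument is correct and follows essentially the same route as the paper: decouple the iteration via the idempotent representation~\eqref{eq:idemp 4}, reduce boundedness to the three real conditions $a_i\in\M1\cap\R=\bigl[-2,\tfrac14\bigr]$, and read off the cube as a Cartesian product in the basis $\{\ut,\uct,\utc\}$. The only difference is that the paper stops immediately after obtaining $\bigl[-2,\tfrac14\bigr]^3$ in those coordinates and does not carry out your proposed orthogonality/equal-norm check in $\R^8$; note that if you do perform that check, the three idempotents have common Euclidean norm $\tfrac12$, so the edge length $\tfrac94$ is to be understood in the idempotent coordinate basis rather than in the ambient $\R^8$ metric.
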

\begin{proof}
Take $x=x_1\ut+x_2\uct+x_3\utc\in\mT(\ut,\uct,\utc)$ such that $\{P_{x}^{n}(0)\}_{n\in\N}$ is bounded. Then, the properties of the idempotent representation \eqref{eq:idemp 4} imply this is the case if and only if the sequences $\{P_{x_{i}}^{n}(0)\}_{n\in\N}$ are bounded, and since $x_i\in\R$, we must have $x_i\in\left[-2,\,\frac{1}{4}\right], i=1,2,3$. Thus, in the basis $\{\ut,\uct,\utc\}$, we obtain the equality 
\[
\T_{e}(\ut,\uct,\utc) = \left[-2,\,\frac{1}{4}\right] \times \left[-2,\,\frac{1}{4}\right] \times \left[-2,\,\frac{1}{4}\right],
\]
where $\times$ denotes the standard cartesian product.
\end{proof}
It seems highly unlikely that the two remaining Platonic solids, the dodecahedron and the icosahedron, can be visualized through tricomplex dynamics. However, it is possible to generate other types of polyhedra, like regular compounds and some specific Archimedean solids. To conclude this section, we wish to give a basic example of the first type. The stellated octahedron, can be seen as a regular dual compound made of two regular tetrahedra. As such, it can be regarded as the simplest polyhedral compound. In our context, Theorem \ref{thm:tetraedre} states that the principal 3D slice $\T(\jh1,\jh2,\jh3)$ is a regular tetrahedron, while several proofs in section \ref{sec:geometrical} hinted that tricomplex conjuation can have a fundamental geometric sense in specific 3D subspaces. More precisely, its effect can be equivalent to that of a reflection along a certain plane. Combining these ideas gives us a simple way, within tricomplex dynamics, to generate a stellated octahedron as the union of the slice $\T(\jh1,\jh2,\jh3)$ and of its geometric dual.

By proposition \ref{Firebrot characterization}, it is not too hard to see that the latter can be obtained by applying a reflection along the plane $y\jh2=0$ to the Firebrot. Moreover, since $c=c_{4}\jh1 + c_{6}\jh2 + c_{7}\jh3\,\Leftrightarrow\,-(c^{\ddagger_{5}})=c_{4}\jh1 - c_{6}\jh2 + c_{7}\jh3$, we deduce that the operation $-(c^{\ddagger_{5}})$ corresponds to the desired reflection. Therefore, the 3D slice
\[
\T(\jh1,-\jh2,\jh3):=\{-(c^{\ddagger_{5}})\,|\,c\in\T(\jh1,\jh2,\jh3)\}
\]
must coincide with the geometric dual of the slice $\T(\jh1,\jh2,\jh3)$. Generating these simultaneously results in the polyhedron illustrated in Figure \ref{fig:starbrot}.
\begin{figure}[ht]
\centering
\includegraphics[width=7cm]{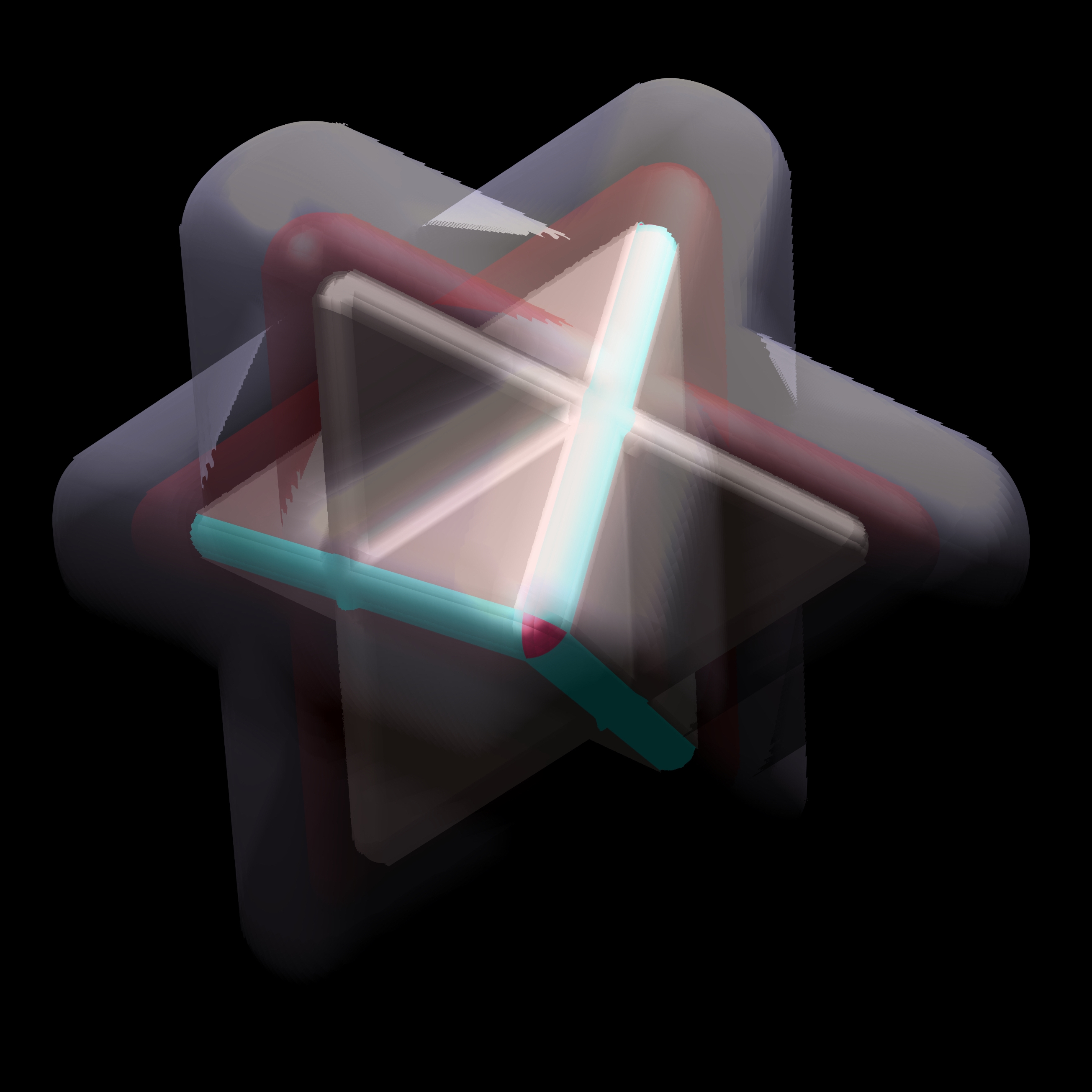}
\caption{The stellated octahedron (also called Starbrot) with various divergence layers.}
\label{fig:starbrot}
\end{figure}
%
%
%
%
%

%
\section*{Conclusion}
In this article, we first established new results in the algebra of tricomplex numbers related to idempotent elements and invertibility, thus paving the way for interesting extensions valid in the multicomplex setting $\Mu{n},n\geq3$. Then, we presented various geometrical characterizations for the principal 3D slices of the tricomplex Mandelbrot set $\M3$, allowing these to be classified according to their connections to 2D or 3D related sets. In the process, we also confirmed the presence of three Platonic solids within the tricomplex dynamics associated with the Mandelbrot algorithm.

In subsequent works, it could be interesting to extend the classification of the principal slices of $\M3$ to the power $p\geq2$. Doing so would probably expand the list of convex polyhedra found among the principal slices because when considering $p=8$, the Firebrot strongly resembles a truncated tetrahedron, which is an Archimedean solid (Figure \ref{fig:archibrot}).
\begin{figure}[h]
\centering
\includegraphics[width=7cm]{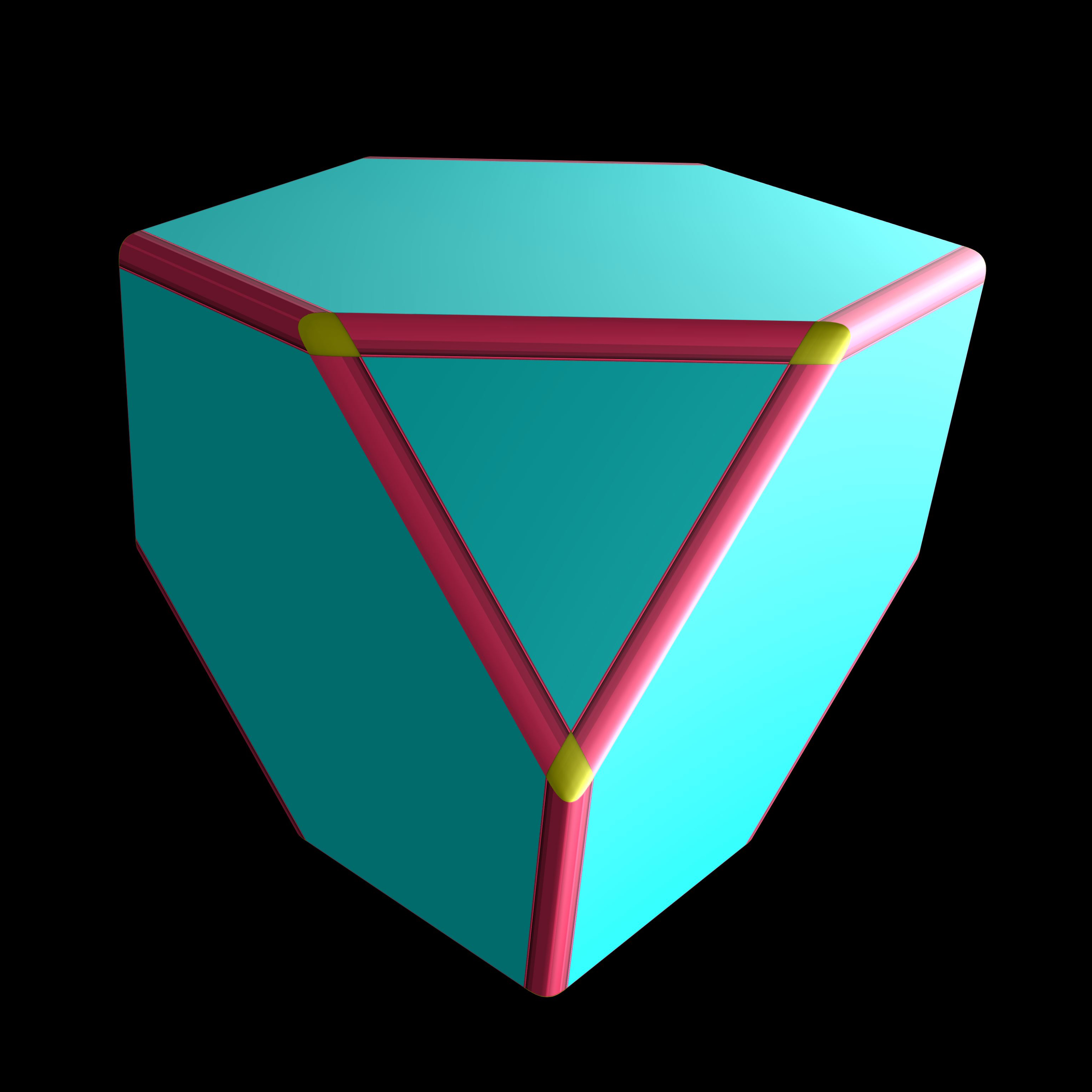}
\caption{The principal 3D slice $\T(\jh1,\jh2,\jh3)$ power $8$.}
\label{fig:archibrot}
\end{figure}

In addition, by considering the algebra $\Mu{n},n\geq3$, the search for regular convex polytopes could be generalised to $n$-dimensional slices. In fact, it is worth noting that the method used in section \ref{sec:cube} provides a straightforward way to establish that the idempotent 4D slice $\T_{e}(\ut,\uct,\utc,\uctc)$ is a tesseract (also called hypercube), that is, a four-dimensional regular convex polytope. Furthermore, the approach in Theorem \ref{thm:tetraedre} can probably be used to prove that in the usual basis, at least one specific 4D slice corresponds to a regular four-dimensional cross-polytope (also called hyperoctahedron). Together, these examples provide sufficient information to allow us to emit a conjecture.
\begin{conjecture}
Let $n\geq3$. The multicomplex Mandelbrot set $\M{n}$ contains exactly three regular convex $n$-polytopes among all possible principal or idempotent $n$-dimensional slices.
\end{conjecture}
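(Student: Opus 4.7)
The strategy is to prove existence and non-existence separately, linked by the full idempotent representation of $\Mu{n}$. Iterating the construction that led from \eqref{eq:idemp tr} to \eqref{eq:idemp 4} one obtains, for each $n\geq 2$, a family of $2^{n-1}$ mutually annihilating minimal idempotents $e_{1},\ldots,e_{2^{n-1}}$ in which every $\eta\in\Mu{n}$ decomposes as $\eta=\sum_{i}\eta_{i}e_{i}$ with complex coefficients, and in which both addition and multiplication act component-wise. Consequently $c\in\M{n}$ if and only if each $c_{i}\in\M{1}$; on the locus where every $c_{i}$ is real, this collapses to $c_{i}\in[-2,\,1/4]$, so the corresponding $n$-slice becomes the intersection of the $2^{n-1}$-dimensional box $[-2,\,1/4]^{2^{n-1}}$ with an affine $n$-plane, i.e.\ a convex polytope with at most $2^{n}$ facets whose normals are $\{-1,0,1\}$-vectors.

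For existence I would generalise the three constructions already present in the paper. The idempotent $n$-slice $\Te(e_{i_{1}},\ldots,e_{i_{n}})$ obtained by selecting $n$ distinct minimal idempotents reproduces the argument used for the Earthbrot and yields the $n$-cube $[-2,\,1/4]^{n}$. For the $n$-orthoplex I would mimic \autoref{Airbrot}: the principal slice spanned by $1$ and $n-1$ judiciously chosen hyperbolic units should, after expansion in the idempotent basis, reduce the boundedness condition to $c_{1}+\sum_{k=2}^{n}\epsilon_{k}c_{k}\in[-2,\,1/4]$ for every sign pattern $\epsilon_{k}\in\{\pm1\}$, cutting out the desired $n$-orthoplex after a translation. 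For the $n$-simplex I would generalise \autoref{thm:tetraedre}: the principal slice spanned by $n$ pure hyperbolic units not involving the identity should yield conditions $\sum_{k=1}^{n}\epsilon_{k}c_{k}\in[-2,\,1/4]$ for all $2^{n}$ sign patterns, and Fourier--Motzkin elimination, performed exactly as in the proof of \autoref{thm:tetraedre}, should reduce these to $n+1$ active constraints, producing a regular $n$-simplex.

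Non-existence carries the real content of the conjecture and I expect it to be the principal obstacle. The plan is to classify, up to the equivalence induced by extending the multicomplex conjugation group of \cite{GarantRochon} to $\Mu{n}$, all principal and idempotent $n$-slices on which every idempotent component is forced to be real (the necessary condition for a flat-faced slice), and then to compute, for each equivalence class, the polytope obtained as the corresponding intersection with the box. The crux is that any such polytope inherits a symmetry group which must be a subgroup of the signed-permutation group $B_{2^{n-1}}$ acting on idempotent coordinates, and one must rule out the exotic regular polytopes---the dodecahedron and icosahedron (symmetry group $H_{3}$) for $n=3$, the $24$-cell (group $F_{4}$) and the $120$- and $600$-cells (group $H_{4}$) for $n=4$---via this symmetry constraint. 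The hardest case is the $24$-cell, whose symmetry group is crystallographic and whose facet normals themselves lie in $\{-1,0,1\}^{4}$, so its exclusion will require a sharp combinatorial argument against its precise facet--vertex incidence pattern. That step in turn will rest on an invariant-theoretic extension of \autoref{thm:16 idempotents} to arbitrary $n$, and is where I expect the technical heart of the proof to sit.
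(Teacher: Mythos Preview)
The statement you attempt is presented in the paper as an open \emph{conjecture}; the paper gives no proof, only a short heuristic paragraph in the conclusion. There is therefore nothing in the paper to compare your argument against.

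As for the proposal itself: your existence arguments for the $n$-cube (via the Earthbrot construction) and the $n$-orthoplex (via a slice containing $1$ and $n-1$ hyperbolic units) are in line with the paper's own informal remarks and look plausible. The genuine gap is the $n$-simplex step, which the paper itself singles out as the hard case. You claim that a slice spanned by $n$ hyperbolic units yields the constraints $\sum_k\epsilon_k c_k\in[-2,1/4]$ for all $2^n$ sign patterns and that Fourier--Motzkin will leave exactly $n+1$ of them active, cutting out a regular simplex. Neither assertion is justified. There are only $2^{n-1}$ minimal idempotents in $\Mu{n}$, so only $2^{n-1}$ linear forms occur, and which sign patterns are realised depends on the specific units chosen. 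More decisively, every facet normal of such a polytope lies in $\{-1,1\}^n$, whereas the $n+1$ facet normals of a regular $n$-simplex (scaled to length $\sqrt{n}$) must have pairwise inner product exactly $-1$; since the inner product of two vectors in $\{-1,1\}^n$ always has the same parity as $n$, this is impossible for every even $n\geq 4$. Hence your Fourier--Motzkin mechanism cannot produce a regular simplex when $n$ is even, and for odd $n$ it would require a Hadamard-type coincidence you have not verified. Your non-existence outline is reasonable in spirit but, as you concede, contains none of the actual work.
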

In view of what has been mentioned above, it appears the easiest way to start to prove this conjecture is to show that both the $n$-dimensional hypercube and the $n$-dimensional hyperoctahedron exist within multicomplex dynamics when $n\geq3$. This is supported by the fact that the number of equations needed to describe these $n$-polytopes elegantly match the number of equations provided by the extended representation \eqref{eq:idemp 4} under any of the two bases considered herein. However, note that this is not the case for the $n$-dimensional simplex. Thus, proving its existence for every value of $n\geq3$ promises to be a more arduous task. Finally, in a more applied perspective, it could be relevant to consider this theory in relation with the natural geometry of fragmentation \cite{Fragmentation}.

\section*{Acknowledgements}
 
DR is grateful to the Natural Sciences and Engineering Research Council of Canada (NSERC) for the financial support of this project. AV would like to thank the FRQNT and the ISM for the awards of graduate research grants. The authors are grateful to Louis Hamel and Étienne Beaulac, from UQTR, for their useful work on the MetatronBrot Explorer (MBE8D) in Java.

\begin{multicols}{2}
\bibliographystyle{abbrv}
\bibliography{ArXiv_Final}
\end{multicols}

\end{document}